\documentclass{amsart}[12pt]
\parskip=12pt
\def\doctype{}

\usepackage{latexsym,amssymb}
\usepackage{color}
\usepackage{fancyhdr}
\usepackage{hyperref}

\newcommand{\colred}{\color[rgb]{.8,0,0}}

\newcommand{\exdiv}{\parallel}

\newcommand{\cB}{\mathcal{B}}

\newcommand\Z{\mathbb{Z}}

\newcommand{\comment}[1]{}

\numberwithin{equation}{section}


\setlength\parindent{0pt}
\setlength{\textwidth}{6in}
\setlength{\oddsidemargin}{0.25in}
\setlength{\evensidemargin}{0.25in}
\setlength{\topmargin}{0in}
\setlength{\textheight}{8.5in}


\pagestyle{fancy}
\fancyhead[C]{}
\fancyhead[R]{}
\fancyhead[L]{}
\cfoot{\vspace{5pt} \thepage}

\fancypagestyle{titlepage}{
\fancyhead[R]{\doctype}
\fancyhead[CL]{}
\cfoot{\vspace{5pt} \thepage}
}


\let\oldsection\section
\newcommand\boldsection[1]{\oldsection{\bf #1}}
\newcommand\starsection[1]{\oldsection*{\bf #1}}
\makeatletter
\renewcommand\section{\@ifstar\starsection\boldsection}
\makeatother


\newtheoremstyle{theorem}
  {12pt}		  
  {0pt}  
  {\sl}  
  {\parindent}     
  {\bf}  
  {. }    
  { }    
  {}     
\theoremstyle{theorem}
\newtheorem{thm}{Theorem}[section]  
\newtheorem{lemma}[thm]{Lemma}     

\newtheorem{conj}[thm]{Conjecture}

\newtheorem{prop}[thm]{Proposition}

\newtheoremstyle{definition}
  {12pt}		  
  {0pt}  
  {}  
  {\parindent}     
  {\bf}  
  {. }    
  { }    
  {}     
\theoremstyle{definition}

\newcommand\rk{{\sc Remark.} }

\renewcommand{\proofname}{Proof}

\makeatletter
\renewenvironment{proof}[1][\proofname]{\par
  \pushQED{\qed}%
  \normalfont \partopsep=\z@skip \topsep=\z@skip
  \trivlist
  \item[\hskip\labelsep
        \scshape
    #1\@addpunct{.}]\ignorespaces
}{%
  \popQED\endtrivlist\@endpefalse
}
\makeatother


\makeatletter
\renewcommand*\@maketitle{%
  \normalfont\normalsize
  \@adminfootnotes
  \@mkboth{\@nx\shortauthors}{\@nx\shorttitle}%
  \global\topskip42\p@\relax 
  \@settitle
  \ifx\@empty\authors \else {\vskip 1em
\vtop{\centering\shortauthors\@@par}} \fi
  \ifx\@empty\@date \else {\vskip 1em \vtop{\centering\@date\@@par}}\fi 
  \ifx\@empty\@dedicatory
  \else
    \baselineskip18\p@
    \vtop{\centering{\footnotesize\itshape\@dedicatory\@@par}%
      \global\dimen@i\prevdepth}\prevdepth\dimen@i
  \fi
  \@setabstract
  \normalsize
  \if@titlepage
    \newpage
  \else
    \dimen@34\p@ \advance\dimen@-\baselineskip
    \vskip\dimen@\relax
  \fi
} 
\renewcommand*\@adminfootnotes{%
  \let\@makefnmark\relax  \let\@thefnmark\relax
  \ifx\@empty\@subjclass\else \@footnotetext{\@setsubjclass}\fi
  \ifx\@empty\@keywords\else \@footnotetext{\@setkeywords}\fi
  \ifx\@empty\thankses\else \@footnotetext{%
    \def\par{\let\par\@par}\@setthanks}%
  \fi
\thispagestyle{titlepage}
}
\makeatother


\begin{document}

\title[MOLS with large holes]{\large Mutually orthogonal latin squares with large holes}

\author{Peter J.~Dukes}
\address{
Mathematics and Statistics,
University of Victoria, Victoria, Canada
}
\email{dukes@uvic.ca, cvanbomm@uvic.ca}

\author{Christopher M. van Bommel}

\thanks{Research of the first author is supported by NSERC grant number 312595--2010;
research of the second author is supported by a CGS-M scholarship}

\date{\today}

\begin{abstract}
Two latin squares are orthogonal if, when they are superimposed,
every ordered pair of symbols appears exactly once.  This definition extends
naturally to `incomplete' latin squares each having a hole on the same rows,
columns, and symbols.  If an incomplete latin square of order $n$ has a hole
of order $m$, then it is an easy observation that $n \ge 2m$.  More
generally, if a set of $t$ incomplete mutually orthogonal latin squares of
order $n$ have a common hole of order $m$, then $n \ge (t+1)m$.  In this
article, we prove such sets of incomplete squares exist for all $n,m \gg 0$
satisfying $n \ge 8(t+1)^2 m$.
\end{abstract}

\maketitle
\hrule



\section{Introduction}

A \emph{latin square} is an $n \times n$ array with entries from an
$n$-element set of symbols such that every row and column is a permutation
of the symbols.  Often the symbols are taken to be from
$[n]:=\{1,\dots,n\}$.  The integer $n$ is called the \emph{order} of the
square.

Two latin squares $L$ and $L'$ of order $n$ are \emph{orthogonal} if
$\{(L_{ij},L'_{ij}): i,j \in [n]\}=[n]^2$; that is, two
squares are orthogonal if, when superimposed, all ordered pairs of symbols
are distinct.  The following arrangement of playing cards illustrates a pair
of orthogonal latin squares of order 4.
\begin{center}
\begin{tabular}{|cccc|}
\hline
A$\spadesuit$ & \colred J$\heartsuit$ & Q$\clubsuit$ & \colred
K$\diamondsuit$ \\
J$\clubsuit$ & \colred A$\diamondsuit$ & K$\spadesuit$ & \colred
Q$\heartsuit$ \\
\colred Q$\diamondsuit$ & K$\clubsuit$ & \colred A$\heartsuit$ &
J$\spadesuit$ \\
\colred K$\heartsuit$ & Q$\spadesuit$ & \colred J$\diamondsuit$ &
A$\clubsuit$ \\
\hline
\end{tabular}
\end{center}
Euler's famous `36 officers problem' asks whether there exists a pair of
orthogonal latin squares of order six.  The answer in that case is negative.

A family of latin squares in which any pair are orthogonal is 
called a set of \emph{mutually orthogonal latin squares}, or `MOLS' for
short.  The maximum size of a set of MOLS of order $n$ is denoted $N(n)$.
It is easy to see that $N(n) \le n-1$ for $n>1$,  with equality if and only
if there exists a projective plane of order $n$.  Consequently, $N(q)=q-1$
for prime powers $q$.  Using number sieves and some `gluing' constructions,
it has been shown in \cite{Beth} (building upon \cite{CES,WilsonMOLS}) that
$N(n) \ge n^{1/14.8}$ for large $n$.

In this article, we are interested in an `incomplete' variant on MOLS.
First, an \emph{incomplete latin square} of order $n$ with a \emph{hole} of
order $m$ is an $n \times n$ array $L=(L_{ij}: i,j \in [n])$ on $n$ symbols
(let us say $[n]$ for convenience) together with a \emph{hole} $M \subseteq
[n]$ such that
\begin{itemize}
\item
$L_{ij}$ is empty if $\{i,j\} \subseteq M$;
\item
$L_{ij}$ contains exactly one symbol if $\{i,j\} \not\subseteq M$;
\item
every row and every column in $L$ contains each symbol at most once; and
\item
symbols in $M$ do not appear in rows or columns indexed by $M$.
\end{itemize}
Note that $M$ is often taken to be an interval of consecutive
rows/columns/symbols (but need not be).  The definition is meant to extend
to any set of symbols (and corresponding hole symbols).  One feature of
incomplete latin squares is that they can `frame' latin subsquares on the
hole.  An example in the case $n=5$, $m=2$ is shown below.
\begin{center}
\begin{tabular}{|ccccc|}
\hline
 &  & 3 & 4 & 5 \\
 &  & 4 & 5 & 3 \\
3 & 4 & 1 & 2 & 5 \\
4 & 5 & 2 & 3 & 1 \\
5 & 3 & 4 & 1 & 2 \\
\hline
\end{tabular}
\end{center}

Two incomplete latin squares $L,L'$ on $[n]$ with common holes $M$ are
orthogonal if
$$\{(L_{ij},L'_{ij}): \{i,j\} \not\subseteq M \}=[n]^2 \setminus M^2,$$
and as before we can have sets of mutually orthogonal incomplete latin
squares.  A set of $t$ such incomplete squares of order $n$ with holes of
order $m$ is denoted $t$-IMOLS$(n;m)$.  Note that the case $m=0$ or $1$
reduces to ordinary MOLS.  For one noteworthy example, there exist
2-IMOLS$(6;2)$ (see \cite{Handbook} for instance) despite the nonexistence
of orthogonal latin squares of order 6 (or 2).

It is a straightforward counting argument (see \cite{Horton}) that the
existence of $t$-IMOLS$(n;m)$ requires
\begin{equation}
\label{ineq}
n \ge (t+1)m.
\end{equation}
The special case $t=1$ recovers the familiar condition that latin subsquares
cannot exceed half the size of their embedding.  On the other hand, $n\ge
2m$ is sufficient for the existence of an incomplete latin square of order
$n$ with a hole of order $m$.   For $t=2,3$, the inequality (\ref{ineq}) is
known to be sufficient, except for small cases; see \cite{HZ,AD}.  The best
presently known result for $t=4$ is a by-product of work on 6-IMOLS$(n;m)$
in \cite{CZ}, and so in this case $n \sim 7m$ is a barrier.  This gives some
evidence of the difficulty of constructing $t$-IMOLS$(n;m)$ near the bound
for general $t$.

For our main result, we prove sufficiency for large $n$ and $m$ when
(\ref{ineq}) is strengthened a little.

\begin{thm}
\label{main}
There exist $t$-IMOLS$(n;m)$ for all sufficiently large $n,m$ satisfying $n
\ge 8(t+1)^2m$.
\end{thm}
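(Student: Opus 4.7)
The plan is to exploit the classical equivalence between a set of $t$-IMOLS$(n;m)$ and a transversal design $\mathrm{TD}(t+2,n)$ that contains a sub-$\mathrm{TD}(t+2,m)$. Such an object can be produced by Wilson's Fundamental Construction (WFC), starting from a master transversal design of prime-power order and placing the sub-design on the inflation of a single, carefully chosen master block.

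The baseline observation is that the ordinary product construction already yields a set of $t$-IMOLS$(qm;m)$ whenever $q\ge t+1$ is a prime power and $N(m)\ge t$, the latter holding for $m$ large by Beth's bound $N(m)\ge m^{1/14.8}$ cited above: inflate each point of $\mathrm{TD}(t+2,q)$ by a factor of $m$ using $\mathrm{TD}(t+2,m)$, and declare the inflation of any fixed block to be the sub-$\mathrm{TD}(t+2,m)$. To cover values of $n$ that are not of the form $qm$, I would instead take as master a $\mathrm{TD}(t+3,q)$ with two of its groups truncated to sizes $a,b\in\{0,1,\ldots,q\}$, weight each surviving point by $m$, and apply WFC with ingredients $\mathrm{TD}(t+2,m)$ and $\mathrm{TD}(t+3,m)$ (both available since $N(m)\ge t+2$ for $m$ large). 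After a routine filling step this yields a set of $t$-IMOLS$(n;m)$ with
\[
n \;=\; \bigl((t+1)q + a + b\bigr)m,
\]
up to an additive correction that does not affect the asymptotics.

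The remaining task is arithmetical: for every $n\ge 8(t+1)^2 m$ with $n,m$ large, one must exhibit a legal triple $(q,a,b)$. By Bertrand's postulate (or any classical density estimate for primes in short intervals) there is a prime power in every interval $[Q_0,2Q_0]$ once $Q_0$ is large, and the two truncation parameters $a,b$ allow the sum $a+b$ to span all of $\{0,1,\ldots,2q\}$, which is enough to hit every residue class of $n/m$ modulo $(t+1)q$. The factor $8(t+1)^2$ in the hypothesis collects the various contributions: a factor of $2$ from Bertrand, a factor of $t+1$ from the leading term of $n$, and a further factor of roughly $4(t+1)$ needed to keep $q$ and $a,b$ safely inside their legal ranges.

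The principal obstacle will be this exact parameter matching: hitting each $n$ precisely, rather than merely to within an additive error. The two-parameter truncation is precisely what supplies the residue flexibility required, and a secondary technical difficulty is localising the sub-$\mathrm{TD}(t+2,m)$ on the inflation of a single master block that avoids both truncated groups, so that the hole structure lands exactly where intended.
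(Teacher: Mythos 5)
There is a genuine gap, and it is exactly the point you defer as ``the principal obstacle.'' Your construction inflates every surviving point of the truncated master design by the same weight $m$, so every order you can reach has the form $n=\bigl((t+1)q+a+b\bigr)m$, i.e.\ a multiple of $m$ (your ``additive correction'' from filling is a bounded quantity and cannot repair this). The theorem, however, asserts existence for \emph{all} sufficiently large pairs $(n,m)$ with $n\ge 8(t+1)^2m$, with no divisibility relation between $n$ and $m$ --- for instance $n=8(t+1)^2m+1$, where $\gcd(n,m)=1$. Your arithmetical step speaks of hitting ``every residue class of $n/m$ modulo $(t+1)q$,'' but $n/m$ is not an integer in general, so varying $q$, $a$, $b$ (which changes $n$ in steps of $m$) cannot hit such targets. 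To cover an arbitrary residue of $n$ modulo $m$ you would need the truncated groups to carry weights that are not multiples of $m$ and then be filled by incomplete ingredients whose hole sizes themselves vary --- which is essentially the full difficulty of the problem, not a routine filling step. (A smaller inaccuracy: $t$-IMOLS$(n;m)$ correspond to an \emph{incomplete} transversal design, a TD$(t+2,n)$ missing a sub-TD$(t+2,m)$; the subdesign need not exist, though in your product baseline it happens to, so that part is fine.)

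The paper's proof takes a different route precisely to escape this rigidity: it develops a two-parameter asymptotic existence theory for incomplete pairwise balanced designs IPBD$((v;w),K)$ (Theorem~\ref{ipbd}), built from uniform IGDDs, resolvable designs, and Chinese-remainder arguments that supply the needed flexibility in the congruence classes of both $v$ and $w$; it then converts an IPBD$((n;m),K)$ into $t$-IMOLS$(n;m)$ by placing $t$ idempotent MOLS of order $k$ on each block (Lemma~\ref{const:imols}), choosing $K_0=\{2^f,2^{f+1}\}$ with $2^f$ the least power of $2$ exceeding $t+1$, which yields the ratio $2^{2f+1}\le 8(t+1)^2$. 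The paper's closing discussion even records the limitation you have run into: product-type constructions give $t$-IMOLS$(pm;m)$ for special ratios, and it is unclear whether any variant of a product construction can handle the general case.
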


We actually obtain Theorem~\ref{main} as a consequence of a more general
result on pairwise balanced block designs with holes.  The corresponding
`inequality' we obtain in this more general case is probably far from best
possible, but it reduces to a reasonable condition for our application to
IMOLS.  The next three sections of the article are devoted to the
development and proof of our result on designs with holes.  In Section 5, we
conclude with a proof of Theorem~\ref{main} and discussion of a few related
items.

\section{Background on block designs}

Let $v$ be a positive integer and $K \subseteq \Z_{\ge 2} :=
\{2,3,4,\dots\}$.  A \emph{pairwise balanced design}
PBD$(v,K)$ is a pair $(V,\cB)$, where
\begin{itemize}
\item
$V$ is a $v$-element set of \emph{points};
\item
$\cB \subseteq \cup_{k \in K} \binom{V}{k}$ is a family of of subsets of
$V$, called \emph{blocks}; and
\item
every two distinct points appear together in exactly one block.
\end{itemize}

In a PBD$(v,K)$, the pairs covered by each block must partition
$\binom{V}{2}$.
In addition, for any point $x \in V$, the remaining $v-1$ points must
partition into `neighborhoods' in the blocks incident with $x$.  It is
helpful to think of the resulting divisibility restrictions as,
respectively, `global' and `local' conditions, which we state below (in
reverse order).

\begin{prop}
\label{neccond}
The existence of a PBD$(v,K)$ implies
\begin{eqnarray}
\label{local-pbd}
v-1 &\equiv& 0 \pmod{\alpha(K)} ~\text{and}\\
\label{global-pbd}
v(v-1)  &\equiv& 0 \pmod{\beta(K)},
\end{eqnarray}
where $\alpha(K):=\gcd\{k-1:k \in K\}$ and $\beta(K):=\gcd\{k(k-1):k \in
K\}$.
\end{prop}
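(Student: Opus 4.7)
The plan is to prove the two congruences independently by elementary double-counting, using the two conditions highlighted in the paragraph before the proposition, namely the local (neighborhood) decomposition at a point and the global partition of pairs by blocks.

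For the local condition \eqref{local-pbd}, I would fix an arbitrary point $x \in V$ and consider the blocks incident with $x$. Since every other point $y \in V \setminus \{x\}$ lies in a unique block through $x$, the sets $B \setminus \{x\}$ as $B$ ranges over the blocks containing $x$ partition $V \setminus \{x\}$. Summing cardinalities gives $v-1 = \sum_{B \ni x} (|B|-1)$. Each term $|B|-1$ is of the form $k-1$ with $k \in K$, so each term is divisible by $\alpha(K) = \gcd\{k-1 : k \in K\}$, and hence so is the sum.

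For the global condition \eqref{global-pbd}, I would count the pairs $\binom{V}{2}$ in two ways using the fact that every $2$-subset of $V$ lies in exactly one block. This gives $\binom{v}{2} = \sum_{B \in \cB} \binom{|B|}{2}$, or equivalently $v(v-1) = \sum_{B \in \cB} |B|(|B|-1)$. Each summand is of the form $k(k-1)$ with $k \in K$ and is therefore divisible by $\beta(K) = \gcd\{k(k-1): k \in K\}$, so $v(v-1)$ is as well.

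There is no real obstacle here: both statements reduce to the observation that a nonnegative integer linear combination of multiples of $d$ is a multiple of $d$. The only thing to be slightly careful about is the standard convention that $k \ge 2$ (guaranteed by $K \subseteq \Z_{\ge 2}$), so that $k-1 \ge 1$ and the gcds $\alpha(K), \beta(K)$ are well defined positive integers.
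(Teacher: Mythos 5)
Your proof is correct and is exactly the argument the paper has in mind: the paper does not give a formal proof but sketches precisely these two decompositions (the ``local'' partition of $V\setminus\{x\}$ by the blocks through a point $x$, and the ``global'' partition of $\binom{V}{2}$ by the blocks) in the paragraph preceding the proposition, and your double counts $v-1=\sum_{B\ni x}(|B|-1)$ and $v(v-1)=\sum_{B\in\cB}|B|(|B|-1)$ are the standard way to turn those partitions into the congruences modulo $\alpha(K)$ and $\beta(K)$. Nothing is missing.
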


The sufficiency of these conditions for $v \gg 0$ is a celebrated result due
to Richard M.~Wilson.

\begin{thm}[Wilson, \cite{RMW1}]
\label{asym-pbd}
Given any $K \subseteq \Z_{\ge 2}$, there exist PBD$(v,K)$ for all
sufficiently large $v$ satisfying  $(\ref{local-pbd})$ and
$(\ref{global-pbd})$.
\end{thm}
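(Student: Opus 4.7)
The plan is to establish that the \emph{PBD-closure}
$$B(K) := \{v \in \Z_{\ge 2} : \text{PBD}(v,K) \text{ exists}\}$$
eventually coincides with the set of integers $v$ satisfying (\ref{local-pbd}) and (\ref{global-pbd}). Following the architecture of Wilson's original argument, I would proceed in three layers: (i) an ``inflation engine'' built from transversal designs, (ii) a recursive fundamental construction that assembles PBDs from smaller ingredients, and (iii) a finite library of seed designs populating every admissible residue class modulo $\beta(K)$.

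First I would invoke transversal designs. A TD$(k,n)$ — equivalent to $k-2$ MOLS of order $n$ — exists for every sufficiently large $n$, by the Chowla--Erd\H{o}s--Straus bound alluded to in the introduction. Viewed as a GDD with $k$ groups of size $n$, a TD$(k,n)$ serves as a master design. Wilson's \emph{fundamental construction} then inflates each point by a weight $\omega(x) \in \Z_{\ge 0}$ and places an auxiliary GDD on each block of the master compatible with the chosen weights, producing a large GDD with flexible group sizes. Filling its groups with smaller PBDs (supplied either by the induction hypothesis or by the seed library) yields a PBD$(v,K)$ whose point count $v$ is tunable through $\omega$ and the fills.

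The bulk of the work is then to exhibit, for each residue class $r$ modulo $\beta(K)$ permitted by (\ref{local-pbd})--(\ref{global-pbd}), one seed PBD on some $v_r$ points in $B(K)$, together with enough flexibility in the recursion to reach every sufficiently large element of that progression in a single inflation step. Seeds come from classical direct constructions: finite projective and affine planes when $k-1$ is a prime power, Bose-type difference families over $\F_q$ covering many block-size sets, and sporadic recursive cases. On the number-theoretic side, Dirichlet's theorem (in mild quantitative form) furnishes the prime powers $q$ required to invoke TD$(k,q)$ in each progression that arises.

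The principal obstacle is coordinated residue management: because the necessary conditions only pin $v$ to a union of progressions modulo $\beta(K)$, one must simultaneously exhibit a seed in every admissible class \emph{and} verify that some single inflation step reaches every large $v$ in that class with the fills available. A related difficulty is avoiding circularity in the ingredient library, which I would resolve by a stratified induction on $v$ anchored at absolute base cases from finite geometry. Meshing the parametric freedom of Wilson's fundamental construction with the arithmetic existence statements for prime powers in prescribed progressions is, to my mind, the hardest aspect of the proof.
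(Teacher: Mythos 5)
First, note that the paper does not prove this statement at all: it is quoted as Wilson's theorem and imported from \cite{RMW1}, so there is no internal proof to compare against. Judged on its own terms, your proposal is an outline of the architecture of Wilson's original argument (PBD-closure, TD-based inflation via the fundamental construction, seeds in each admissible residue class), and that architecture is the right one --- but as written it has genuine gaps, because the two steps that carry all the difficulty are named rather than carried out. You acknowledge this yourself when you call ``coordinated residue management'' the hardest aspect; but that \emph{is} the theorem. Concretely: (1) your seed library is not actually supplied for general $K$. Projective and affine planes only exist for prime-power orders and only give blocks of very particular sizes, so they cannot populate every residue class modulo $\beta(K)$ permitted by (\ref{local-pbd})--(\ref{global-pbd}) for an arbitrary $K \subseteq \Z_{\ge 2}$. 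In Wilson's proof the seeds come from his difference-family theorem over finite fields (for every sufficiently large prime power $q \equiv 1 \pmod{\beta(K)}$ one has $q \in B(K)$, via evenly distributed difference families), and proving that is a substantial cyclotomic/character-sum argument you do not sketch. (2) The claim that ``some single inflation step reaches every large $v$'' in each admissible progression is asserted, not argued. Wilson's route here is structural: $B(K)$ is PBD-closed, and he proves that a PBD-closed set containing suitable elements eventually contains the full arithmetic progression --- this requires the closure manipulations (adjoining points, breaking up blocks, the TD-multiplication $v \mapsto vk$-type lemmas) to be meshed quantitatively, and it is precisely where the ``stratified induction'' you invoke must be made to avoid circularity with explicit base cases in every class.

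So the proposal is a correct high-level roadmap of the known proof, but it would not compile into a proof without supplying (1) the finite-field seed constructions valid for arbitrary $K$ and (2) the closure argument that converts one representative per admissible class into all sufficiently large admissible $v$. Since the paper simply cites Wilson for this result, the appropriate resolution for your write-up would likewise be a citation, or else a genuinely complete treatment of those two ingredients.
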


Theorem~\ref{asym-pbd} is the foundation for an existence theory of many
`PBD-closed' combinatorial structures, including graph decompositions.
See \cite{LW} for a very general extension and good survey of the
applications.  Even the recent theorem of Keevash \cite{Keevash} on
$t$-designs extends a few aspects of Wilson's proof of Theorem~\ref{asym-pbd}.

Now, let $v \ge w$ be positive integers and $K\subseteq \Z_{\ge 2}$.  An
\emph{incomplete pairwise balanced design} IPBD$((v;w),K)$ is a triple
$(V,W,\cB)$ such that
\begin{itemize}
\item
$V$ is a set of $v$ points and $W \subset V$ is a \emph{hole} of size $w$;
\item
$\cB \subseteq \cup_{k \in K} \binom{V}{k}$ is a family of blocks;
\item
no two distinct points of $W$ appear in a block; and
\item
every two distinct points not both in $W$ appear together in exactly one
block.
\end{itemize}

A closely related notion is that of a PBD$(v,K)$ containing a `subdesign' or
`flat' PBD$(w,K)$.  When such a subdesign exists, we obtain an
IPBD$((v;w),K)$ by taking the difference of block sets.  Likewise, an IPBD
with hole $W$ can be `filled' with a PBD (or another IPBD) on $W$, but only
when this smaller design exists.

The case $w=v$ leads to $\cB=\emptyset$ and we exclude this in what follows.
The case $w=1$ reduces to a PBD$(v,K)$, since such a hole contains no pairs.

By analogy with (\ref{local-pbd}) and (\ref{global-pbd}), there are naive
divisibility conditions on the parameters.

\begin{prop}
\label{neccond-h}
The existence of an IPBD$((v;w),K)$ implies
\begin{eqnarray}
\label{local}
v-1~\equiv~w-1 &\equiv& 0 \pmod{\alpha(K)}, ~\text{and}\\
\label{global}
v(v-1) - w(w-1) &\equiv& 0 \pmod{\beta(K)}.
\end{eqnarray}
\end{prop}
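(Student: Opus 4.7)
The plan is to adapt the standard double-count used to derive \eqref{local-pbd} and \eqref{global-pbd} in Proposition~\ref{neccond}, splitting the argument into a ``local'' replication count at a single point and a ``global'' pair-count.

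For \eqref{local}, I would fix a point $x \in V$ and examine the sizes of the neighborhoods $B \setminus \{x\}$ as $B$ ranges over the blocks containing $x$. If $x \in V \setminus W$, every other point of $V$ lies with $x$ in exactly one block, so the blocks through $x$ partition $V \setminus \{x\}$ into parts of sizes $k-1$ for various $k \in K$; this yields $v - 1 \equiv 0 \pmod{\alpha(K)}$. For $x \in W$, the rule that no two hole points share a block forces each block through $x$ to have its remaining $k-1$ points inside $V \setminus W$, and since each point of $V \setminus W$ still meets $x$ in exactly one block, these neighborhoods now partition $V \setminus W$ into parts of sizes $k-1$. Hence $v - w \equiv 0 \pmod{\alpha(K)}$, and subtracting then gives $w - 1 \equiv 0 \pmod{\alpha(K)}$.

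For \eqref{global}, the plan is to sum $\binom{|B|}{2}$ over all blocks $B \in \cB$. Each block covers $\binom{|B|}{2}$ pairs, the $\binom{w}{2}$ pairs inside $W$ are covered zero times, and all remaining pairs are covered exactly once. Doubling the resulting identity gives $\sum_{B \in \cB} |B|(|B|-1) = v(v-1) - w(w-1)$, and since every term on the left is divisible by $\beta(K)$, so is the right-hand side. There is no real obstacle here; the only place where the IPBD structure enters (rather than an ordinary PBD argument) is the observation that a block through a hole point $x \in W$ entirely avoids $W \setminus \{x\}$, which is what lets the local count at $x$ produce $v - w$ in place of $v - 1$.
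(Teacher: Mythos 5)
Your proof is correct and is exactly the standard local/global counting argument that the paper implicitly invokes (it states Proposition~\ref{neccond-h} without proof, ``by analogy'' with Proposition~\ref{neccond}): the replication count at a point inside versus outside the hole gives $v-w\equiv v-1\equiv 0 \pmod{\alpha(K)}$, and the pair count over blocks gives $\sum_B |B|(|B|-1)=v(v-1)-w(w-1)\equiv 0\pmod{\beta(K)}$. No gaps; the only implicit assumptions (a point outside the hole exists, i.e.\ $w<v$, and $w\ge 1$) are consistent with the paper's conventions.
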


We say integers $v$ and $w$ are \emph{admissible} (for IPBD with block sizes
$K$) if (\ref{local}) and (\ref{global}) hold.  There is another necessary
condition taking the form of an inequality.

\begin{prop}
\label{holesize-bound}
Let $k:=\min K$.  The existence of an IPBD$((v;w),K)$ with $v>w$ implies
\begin{eqnarray}
\label{ineq-h}
v \ge (k-1)w +1.
\end{eqnarray}
Equality holds if and only if every block intersects the hole and has size exactly $k$.
\end{prop}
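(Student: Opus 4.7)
The plan is to pick a point $y \in V\setminus W$ (which exists since $v>w$) and count incidences at $y$. The key observation is that the pairs $\{y,z\}$ with $z \in W$ must all be covered by blocks through $y$, and since no block contains two points of $W$, each such pair requires a distinct block. This gives at least $w$ ``hole-meeting'' blocks through $y$, each of size $\ge k$.

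More precisely, I would sum $|B|-1$ over the blocks $B \ni y$: this counts the $v-1$ points $z \neq y$, each exactly once, so $\sum_{B \ni y}(|B|-1) = v-1$. Isolate the $w$ hole-meeting blocks through $y$, whose contribution is at least $w(k-1)$. The remaining (hole-avoiding) blocks through $y$ contribute a non-negative amount, so
\begin{equation*}
v - 1 \;=\; \sum_{B \ni y}(|B|-1) \;\ge\; w(k-1),
\end{equation*}
which rearranges to (\ref{ineq-h}).

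For the equality characterization, note that $v-1 = w(k-1)$ forces two things simultaneously at the chosen $y$: every block through $y$ must meet $W$ (otherwise the inequality is strict), and every hole-meeting block through $y$ must have size exactly $k$. Since $y$ was an arbitrary point of $V\setminus W$ and every block contains at least one point of $V\setminus W$ (as $|B|\ge 2$ and $|B\cap W|\le 1$), the first condition applied for all such $y$ says that every block meets $W$, and the second says every block has size exactly $k$. Conversely, if every block meets the hole and has size $k$, then for any $y \in V\setminus W$ all blocks through $y$ are hole-meeting of size $k$, so $v-1 = \sum_{B \ni y}(|B|-1) = w(k-1)$.

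The argument is essentially a Fisher-style local count, and there is no real obstacle — the only thing to watch is the degenerate case $w=0$ (where the inequality becomes $v\ge 1$, trivially true from $v>w$) and the fact that a block cannot be contained in $W$ because $|B|\ge 2 > 1 \ge |B\cap W|$.
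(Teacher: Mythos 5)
Your proof is correct and follows essentially the same route as the paper: a local count at a point $y$ outside the hole, using that the $w$ hole points force $w$ distinct blocks through $y$, each contributing at least $k-1$ of the remaining $v-1$ points. You additionally spell out the equality characterization, which the paper's brief proof leaves implicit but which follows from the same count exactly as you describe.
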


\begin{proof}
This is an easy adaptation of the argument in \cite{DLL2}, which handles the case $K=\{k\}$.  
A point outside the hole must appear in: (1) at least $w$ blocks, as no two points in the hole can be in the same block; and (2) at most $\frac{v - 1}{k - 1}$ blocks.  
\end{proof}

The case $v=(k-1)w+1$ is equivalent (upon truncating points from the hole)
to 'resolvable' designs with $v-w$ points and block size $k-1$, where we say
a design $(V,\cB)$ is \emph{resolvable} if $\cB$ can be partitioned into
1-factors, or \emph{parallel classes} on $V$.  It is known that resolvable designs with block
size $k$ exist when possible for $v \gg 0$.

\begin{thm}[\cite{RCW}]
\label{thm:rpbd:asym}
There exists resolvable PBD$(v,\{k\})$ for all sufficiently large $v$
satisfying $v \equiv k \pmod{k(k-1)}$.
\end{thm}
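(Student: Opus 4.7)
The plan is to follow the Wilson-style recursive machinery that Ray-Chaudhuri and Wilson developed for resolvable designs. Write $\cR_k$ for the set of integers $v$ admitting a resolvable PBD$(v,\{k\})$. A resolution into parallel classes forces $k \mid v$ (each class partitions the point set) and $(k-1) \mid (v-1)$ (each point lies in $(v-1)/(k-1)$ classes); since $\gcd(k,k-1)=1$, the admissible values are exactly those $v \equiv k \pmod{k(k-1)}$. The goal is to show that $\cR_k$ eventually contains every such $v$.

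The first ingredient is a supply of resolvable transversal designs: a TD$(k,n)$ is resolvable iff a TD$(k+1,n)$ exists, and the asymptotic lower bounds on $N(n)$ discussed in Section~1 guarantee TD$(k+1,n)$ for all sufficiently large $n$. This provides a resolvable $\{k\}$-GDD with $k$ groups of size $n$. More generally, Wilson's Fundamental Construction in its resolvable form takes a master PBD$(u,K_0)$ together with a TD$(k_0,n)$ for each $k_0 \in K_0$ and produces a resolvable $\{k_0 : k_0 \in K_0\}$-GDD on $un$ points with $u$ groups of size $n$; resolvability transfers because the parallel classes of the TD ingredients can be aligned across the master blocks using a common distinguished group.

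The second ingredient is a filling step. Adjoin a new point $\infty$, and on each group together with $\infty$ place a resolvable PBD$(n+1,\{k\})$, arranging each filling so that the unique parallel class through $\infty$ is the same partition of $\infty$ across all groups. The result is a resolvable PBD$(un+1,\{k\})$, provided $n+1 \in \cR_k$ and the inflation gave block sizes in $\{k\}$. Iterating this with Wilson's PBD theorem (Theorem~\ref{asym-pbd}) applied to a block-size set $K_0$ chosen inside the ``good'' integers, and seeded by a finite stock of explicit resolvable designs constructed over $\F_q$ (difference families, Bose-type starters), one concludes by a residue/sieve argument that every sufficiently large admissible $v$ lies in $\cR_k$.

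The main obstacle will be the seed step. The recursion can only reach residues already represented among the seeds, so for each fixed $k$ one must supply enough direct constructions of small resolvable designs to cover every residue class modulo $k(k-1)$ above some threshold. Classically this is handled by algebraic constructions in $\F_q$ together with affine and cyclotomic tricks, but guaranteeing a complete set of residue-class representatives is where the bulk of the technical work lies. A secondary bookkeeping concern is to verify that the parallel classes coming from the inflated GDD, from the filled-in subdesigns, and from the aligning $\infty$-class all fit together into a genuine resolution of the final PBD; this rests on the clean correspondence between resolvable TD$(k,n)$ and TD$(k+1,n)$ exploited in Step~1.
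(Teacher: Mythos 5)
The paper does not actually prove this statement: Theorem~\ref{thm:rpbd:asym} is quoted as a known result of Ray--Chaudhuri and Wilson \cite{RCW}, so the baseline here is a citation, not an internal argument. Measured against what a proof would require, your sketch follows the right general philosophy (recursive constructions plus seeds covering residue classes), but it contains gaps that are not merely bookkeeping.

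The central one is your ``resolvable form'' of Wilson's Fundamental Construction. Inflating a master PBD$(u,K_0)$ by weight $n$ with resolvable TD ingredients does \emph{not} yield a resolvable GDD: a parallel class of the resolvable TD placed on a block $B$ covers only the $|B|\cdot n$ points lying over $B$, and since the blocks of a PBD do not partition the point set (a PBD$(u,K_0)$ is in general not resolvable), these partial classes cannot be assembled into parallel classes of the whole design; no ``alignment along a common distinguished group'' repairs this, because the obstruction is global, not local to one group. This is precisely why the actual arguments in the literature work either with resolvable master designs or, more typically, with \emph{frames} (GDDs whose blocks split into holey parallel classes, each missing exactly one group), and the same issue infects your filling step: adjoining $\infty$ and filling each group with a resolvable PBD$(n+1,\{k\})$ produces a resolution only if the classes of the big design are holey (miss one group at a time), so that one full class of each filled group can be matched with them --- with a genuinely resolvable GDD the counts do not match. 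Finally, the seed step you defer --- direct constructions hitting every residue class modulo $k(k-1)$ above a threshold, together with the closure argument showing the recursion reaches all large admissible $v$ --- is the bulk of the Ray--Chaudhuri--Wilson theorem, not a secondary concern. As it stands, your text is an outline of the known program with its two hardest components (frame machinery and seeds) asserted rather than proved, which is exactly the work the paper delegates to \cite{RCW}.
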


\rk
The above necessary congruence on $v$ comes from $k \mid v$ (for a parallel
class) and (\ref{local-pbd}).

Existence of IPBDs in the case $K=\{k\}$ was recently considered by the
first author, Ling and Lamken.  The result comes very close
to proving sufficiency of the preceding conditions.

\begin{thm}[\cite{DLL2}]
\label{ipbd-k}
Let $k$ be an integer at least two.  For every real number $\epsilon>0$,
there exists IPBD$((v;w),\{k\})$ for all sufficiently large admissible $v,w$
satisfying $(\ref{local})$, $(\ref{global})$, and $v \ge (k-1+\epsilon) w$.
\end{thm}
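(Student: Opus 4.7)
The plan is to build the IPBD in two stages via group divisible designs. Stage one produces a \emph{skeleton} $k$-GDD with one group of size $w$ (destined to be the hole) and $u$ further groups all of a common intermediate size $m$, so that $um+w=v$. Stage two \emph{fills} each of the $u$ groups of size $m$ with a PBD$(m,\{k\})$. The union of the skeleton blocks and the filling PBD blocks covers every pair not contained in the hole exactly once, since inter-group pairs are covered by the GDD and intra-group pairs in the small groups by the filling PBDs. Thus the union is exactly an IPBD$((v;w),\{k\})$.

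Three asymptotic ingredients feed this construction. (i) By Theorem~\ref{asym-pbd}, a PBD$(m,\{k\})$ exists for all sufficiently large $m$ with $m\equiv 1\pmod{k-1}$ and $k(k-1)\mid m(m-1)$. (ii) The analogous asymptotic theorem for $k$-GDDs (also due to Wilson) guarantees a $k$-GDD of type $m^u w^1$ whenever $m, u, w$ are large and satisfy the obvious local and global divisibility conditions, namely $(k-1)\mid (u-1)m+w$, $(k-1)\mid um$, and $k(k-1)\mid \binom{u}{2}m^2+umw$. (iii) A number-theoretic lemma: given IPBD-admissible $(v,w)$ with $v\ge(k-1+\epsilon)w$ and $v, w$ large, one can choose $m$ in the correct residue class modulo $\lcm(k-1,\, k(k-1))$ so that $m\mid v-w$, the quotient $u=(v-w)/m$ is large, and the necessary conditions of (ii) hold.

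The heart of the proof, and the main obstacle, is ingredient (iii): simultaneously satisfying all the congruences while keeping both $m$ and $u$ large enough to invoke (i) and (ii). The slack provided by $v\ge(k-1+\epsilon)w$ is exactly what makes this possible, since $u\ge (k-2+\epsilon)w/m$ yields a window for $m$ of length roughly linear in $w$; within such a window the CRT-style admissibility conditions admit many valid residues, from which one in the correct class modulo $\lcm(k-1,\,k(k-1))$ can be selected. This slack also explains why the theorem falls short of the sharp bound $v\ge(k-1)w+1$ from Proposition~\ref{holesize-bound}: at $v=(k-1)w+1$ the parameter window collapses, every block is forced to meet the hole, and the skeleton-plus-filling template degenerates, so a quite different construction would be required there.
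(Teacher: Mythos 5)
Your ingredient (ii) is the gap, and it is not a small one: you invoke ``the analogous asymptotic theorem for $k$-GDDs'' to obtain a $k$-GDD of type $m^u w^1$ from divisibility conditions alone, but no such theorem exists in the generality you need, and indeed none can, because the conditions you list are not the full set of necessary conditions. Counting blocks through a point in one of the small groups (each block meets the big group at most once, and the blocks through the point partition the $v-m$ points outside its group into sets of size $k-1$) forces $v-m\ge (k-1)w$, an inequality exactly parallel to Proposition~\ref{holesize-bound}. More importantly, the asymptotic GDD results that are actually available --- Theorem~\ref{asym-gdd} (Draganova, Liu) for uniform type $g^u$ with $g$ fixed and $u\to\infty$, or Lemma~\ref{lem:nugdd:asym} for type $s^mt^1$ with the number of groups fixed and $t$ in a fixed residue class --- do not allow an exceptional group whose size $w$ is a constant fraction of the total number of points, which is precisely the regime the theorem addresses ($w$ up to $v/(k-1+\epsilon)$). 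A $k$-GDD of type $m^uw^1$ with $w=\Theta(v)$ already yields the desired IPBD by filling, so your step (ii) is essentially equivalent in strength to the statement being proved: the whole difficulty of the problem is to manufacture a design with a hole (or group) that large, and you have assumed it as a black box. Your step (iii) also quietly requires $m\mid v-w$ on top of several congruences, but that is repairable; (ii) is not, as stated.

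For contrast, the route the paper follows (adapting \cite{DLL2}, and generalized in Lemma~\ref{lem:ipbd:recur}) does the ``GDD plus filling'' scheme only where a fixed or slowly varying hole is needed (Proposition~\ref{prop:ipbd:class1}, Theorem~\ref{thm:ipbd:K:fixed-w}); the large hole is created by a different mechanism. One starts from a transversal design TD$(m+2,n)$, truncates one group, and applies Wilson's fundamental construction (Lemma~\ref{wfc-lemma}) with weight $k-1$ on most points but weights from $R=\{k-1,\,k^2-1,\,r\}$, $r=(k-1)(m-1)/(k-2)$, on the last group, using the ingredient GDDs of Lemmas~\ref{lem:ipbd:gdd-max} and \ref{lem:ipbd:gdd-small}. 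This produces a GDD whose last group has size $t\in n*R$, which can be made a constant fraction of $v$, and the arithmetic-progression structure of $n*R$ (common difference $k(k-1)$) lets one hit every admissible hole size; filling the other groups with IPBDs of small fixed hole $z$ via Lemma~\ref{const:gdd-ipbd} then merges everything into a hole of size $t+z$ and yields ratios $v/w$ down to $k-1+\epsilon$. Some construction of this kind --- one that actively builds the proportionally large group rather than postulating it --- is what your argument is missing.
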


For the proof of Theorem~\ref{ipbd-k}, and our extension to general $K$
which follows, we use a common generalization of MOLS and PBDs (and their
incomplete variants).  Let $T$ denote an integer partition of $v$.
A \emph{group divisible design} of \emph{type} $T$ with block sizes in $K$,
denoted GDD$(T,K)$, is a triple $(V,\Pi,\cB)$ such that
\begin{itemize}
\item
$V$ is a set of $v$ points;
\item
$\Pi=\{V_1,\dots,V_u\}$ is a partition of $V$ into \emph{groups} so that
$T=\{|V_1|,\dots,|V_u|\}$;
\item
$\cB \subseteq \cup_{k \in K} \binom{V}{k}$ is a set of of blocks meeting
each group in at most one point; and
\item
any two points from distinct groups $V_j$ appear together in exactly one
block.
\end{itemize}

It is common to use `exponential notation' such as $g^u$ to stand for $u$
groups of size $g$.  For instance, a \emph{transversal design} TD$(k,n)$ is
a GDD$(n^k,\{k\})$.  In this case, the blocks are transversals of the
partition.  A TD$(k,n)$ is equivalent to $k-2$ MOLS of order $n$, where two
groups are reserved to index the rows and columns of the squares.

In general, a group divisible design of type $T=g^u$ is called
\emph{uniform}. There is a satisfactory asymptotic existence result for such
objects, stated here for later use.

\begin{thm}[Draganova \cite{Draganova} and Liu \cite{Liu}]
\label{asym-gdd}
Given $g$ and $K \subseteq \Z_{\ge 2}$, there exists a GDD$(g^u,K)$ for all
sufficiently large $u$ satisfying
\begin{eqnarray}
\label{local-gdd}
g(u-1) &\equiv& 0 \pmod{\alpha(K)} ~\text{and}\\
\label{global-gdd}
g^2 u(u-1) &\equiv& 0 \pmod{\beta(K)}.
\end{eqnarray}
\end{thm}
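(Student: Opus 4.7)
The natural plan is to reduce the theorem to Wilson's PBD existence theorem (Theorem~\ref{asym-pbd}) via inflation. When $g=1$, a GDD$(1^u,K)$ is precisely a PBD$(u,K)$, and the congruences (\ref{local-gdd}), (\ref{global-gdd}) collapse to (\ref{local-pbd}), (\ref{global-pbd}); so this base case is immediate from Theorem~\ref{asym-pbd}.

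For $g \ge 2$ I would run Wilson's fundamental construction. Start with a master PBD$(u, K^*)$ on a point set $U$; replace each point $x$ by a ``column'' $\{x\} \times [g]$; and replace each block $B = \{x_1, \ldots, x_k\}$ by a transversal design TD$(k,g)$ on $B \times [g]$ whose groups are exactly the columns above $x_1, \ldots, x_k$. The union of these TDs, with groups inherited from the $u$ columns, is a GDD$(g^u, K^*)$ and in particular a GDD$(g^u, K)$. This step requires TD$(k,g)$ to exist for every $k \in K^*$, equivalently $N(g) \ge k-2$. Since $N(n) \to \infty$ by the bounds cited in the introduction, for any fixed $g$ one can cap block sizes at $N(g)+2$, and a standard number-theoretic lemma produces a finite $K^* \subseteq K \cap [2, N(g)+2]$ with $\alpha(K^*) = \alpha(K)$ and $\beta(K^*) = \beta(K)$, provided $g$ is not too small; the finitely many very small values of $g$ can be handled by direct construction of ingredient GDDs.

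The main obstacle is that the divisibility conditions required of the master PBD, namely $u \equiv 1 \pmod{\alpha(K^*)}$ and $u(u-1) \equiv 0 \pmod{\beta(K^*)}$, are in general strictly stronger than the GDD conditions (\ref{local-gdd}), (\ref{global-gdd}), which depend on $g$. Thus a naive ``inflate a PBD on $u$ points'' strategy cannot reach every admissible pair $(g,u)$. To close this gap I would instead apply Wilson's theorem to an auxiliary PBD on $gu$ or $gu+1$ points containing a distinguished parallel class or spread of $g$-blocks; deleting these singled-out blocks (and treating them as the groups) uncovers the required GDD$(g^u, K)$. Setting up the auxiliary PBD so that its own Wilson hypotheses hold precisely when (\ref{local-gdd}) and (\ref{global-gdd}) are satisfied, and guaranteeing the existence of the needed parallel class (via resolvability results such as Theorem~\ref{thm:rpbd:asym}, or via adjoining an ``infinity'' point and truncating), is the delicate number-theoretic and combinatorial heart of the argument.
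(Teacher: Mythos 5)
There is a genuine gap, and it sits exactly where you placed your hopes. Your reduction to Wilson's theorem via inflation (master PBD$(u,K^*)$, points blown up to columns of size $g$, blocks replaced by TD$(k,g)$'s) only produces GDD$(g^u,K)$ for those $u$ satisfying the \emph{PBD} congruences $u\equiv 1 \pmod{\alpha(K)}$ and $u(u-1)\equiv 0\pmod{\beta(K)}$, which, as you note, are strictly stronger than (\ref{local-gdd})--(\ref{global-gdd}) whenever $g$ shares factors with $\alpha(K)$ or $\beta(K)$; those residual congruence classes of $u$ are precisely the content of the theorem. Your proposed fix --- an auxiliary PBD on $gu$ or $gu+1$ points containing a distinguished spread or parallel class of $g$-blocks whose deletion yields the groups --- is not carried out, and it cannot be dismissed as routine: a PBD$(gu, K\cup\{g\})$ in which the $g$-blocks form a parallel class \emph{is} (after deleting those blocks) exactly a GDD$(g^u,K)$, so at the decisive step you are assuming an object equivalent to the one you are trying to build. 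Neither Wilson's Theorem~\ref{asym-pbd} (which gives no control over a spread) nor Theorem~\ref{thm:rpbd:asym} (single block size, and its own congruences $v\equiv k\pmod{k(k-1)}$ need not match) supplies it. A secondary but real problem is the ingredient step: for a fixed small $g$ the needed TD$(k,g)$ may simply fail to exist for every useful $k\in K$ (e.g.\ $N(6)=1$), and when all members of $K$ exceed $N(g)+2$ there is no finite $K^*\subseteq K\cap[2,N(g)+2]$ at all; ``handled by direct construction'' does not cover this, since the theorem is asserted for every fixed $g$.

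For comparison: the paper does not prove this statement at all --- it is quoted from Draganova and Liu --- and the surrounding text (see the remarks before Theorem~\ref{asym-igdd}) indicates that the known proofs proceed by the Lamken--Wilson method of edge-colored graph decompositions, i.e.\ an asymptotic decomposition theorem applied to a suitable edge-colored complete (multipartite) graph, rather than by inflating a master PBD. That machinery is what absorbs the $g$-dependent divisibility conditions that defeat the naive inflation argument, so if you want a self-contained proof you should work along those lines rather than trying to conjure a spread-equipped auxiliary PBD.
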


A nice feature of GDDs is that their groups act as a partition into holes;
each can be `filled' with PBDs (or smaller GDDs).  Another feature of GDDs
is that they admit a natural `expansion' of their groups.  This is made
precise in the next construction.  The idea is simply to independently
replicate the points of a `master' GDD, replacing its blocks by `ingredient'
GDDs of the right type.

\begin{lemma}[Wilson's fundamental construction, \cite{ConsUses}]
\label{wfc-lemma}
Suppose there exists a GDD $(V,\Pi,\cB)$, where $\Pi=\{V_1,\dots,V_u\}$.
Let $\omega:V \rightarrow \Z_{\ge 0}$, assigning nonnegative weights to each
point in such a way that for every $B \in \cB$ there exists a
GDD$([\omega(x) : x \in B],K)$.  Then there exists a GDD$(T,K)$, where
$$T=\left[\sum_{x \in V_1} \omega(x),\dots,\sum_{x \in V_u}
\omega(x)\right].$$
\end{lemma}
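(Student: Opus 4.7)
The plan is to do the standard ``blow-up'' construction. I would first inflate each point $x \in V$ into a block of $\omega(x)$ copies. Concretely, set $V^\ast := \{(x,i) : x \in V,\ 1 \le i \le \omega(x)\}$, and for each $j$ let $V_j^\ast := \{(x,i) : x \in V_j,\ 1 \le i \le \omega(x)\}$, so that $|V_j^\ast| = \sum_{x \in V_j} \omega(x)$, giving the desired type $T$. For each $x \in V$, write $S_x := \{(x,i) : 1 \le i \le \omega(x)\}$, so that $S_x$ is a ``cloud'' of points sitting over $x$. Clouds over the same master group lie in the same new group; clouds over distinct master groups lie in distinct new groups.

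Next, I would build the block set by stitching in one ingredient GDD for each master block. For every $B \in \cB$, the hypothesis provides a GDD of type $[\omega(x) : x \in B]$ with block sizes in $K$, which I would realize on the point set $\bigcup_{x \in B} S_x$ with groups $\{S_x : x \in B\}$. Take $\cB^\ast$ to be the union of all these ingredient block sets over $B \in \cB$. Every block of $\cB^\ast$ already has size in $K$, so the block-size condition for a GDD$(T,K)$ is immediate.

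The verification has two remaining parts. For the ``at most one point per group'' condition, suppose two points of some new block $C \in \cB^\ast$ both lie in $V_j^\ast$. Then $C$ comes from the ingredient GDD attached to a unique master block $B$, and the two points project to points $x, x' \in B \cap V_j$; since $B$ meets $V_j$ in at most one point we must have $x = x'$, but then the two points both lie in the group $S_x$ of the ingredient GDD, contradicting the GDD property there. For the ``exactly one block per transverse pair'' condition, fix $(x,a) \in V_i^\ast$ and $(y,b) \in V_j^\ast$ with $i \ne j$. Then $x$ and $y$ lie in distinct master groups and so appear together in exactly one master block $B_0$; this pair is covered exactly once among the blocks of the ingredient GDD on $B_0$ (since $S_x$ and $S_y$ are distinct groups there), and not at all by any ingredient GDD for $B \ne B_0$, since such a $B$ fails to contain at least one of $x,y$.

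There is essentially no obstacle here: the construction is mechanical and the proof is a bookkeeping check once the right point set is written down. The only minor wrinkle is the possibility $\omega(x) = 0$, which is harmless because such an $x$ simply contributes no points to $V^\ast$ and forces $S_x = \emptyset$ inside each ingredient GDD, which is a legitimate (degenerate) group; empty groups $V_j^\ast$ may be discarded from the final type $T$ without affecting any of the GDD axioms.
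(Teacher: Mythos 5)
Your proof is correct: the blow-up of points into clouds of size $\omega(x)$, the insertion of one ingredient GDD per master block, and the two verification steps (at most one point per new group, exactly one block per transverse pair) are exactly the standard argument for Wilson's Fundamental Construction. The paper itself gives no proof, citing \cite{ConsUses} instead, and your argument is precisely the classical one found there, including the correct handling of zero weights as degenerate empty groups.
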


An \emph{incomplete group divisible design}, or IGDD, is a quadruple
$(V,\Pi,\Xi,\cB)$ such that $V$ is a set of $v$ points, $\Pi =
\{V_1,\dots,V_u)$ is a partition of $V$ into `groups',
$\Xi=\{W_1,\dots,W_u\}$ with $W_i \subseteq V_i$ called `holes' for each
$i$, and $\cB$ is a set of blocks (say with sizes in $K$) such that
\begin{itemize}
\item
two points get covered by a block (exactly one block) if and only if they
come from different groups, say $V_i$ and $V_j$, $i \neq j$, {\bf and} are not both in to the corresponding holes $W_i$ and $W_j$.
\end{itemize}

As with GDDs, the type of an IGDD can be written by listing, using
exponential notation when appropriate, the pairs $(|V_i|;|W_i|)$ of group
size and corresponding hole size.  So, for example, a (uniform)
IGDD in which every group $V_i$ has size
$g$ and every hole $W_i$ has size $h$ is denoted by IGDD$((g;h)^u,K)$.

The necessary divisibility conditions follow a similar structure as before.

\begin{prop}
\label{neccond-igdd}
The existence of an IGDD$((g;h)^u,K)$ implies
\begin{eqnarray}
\label{local-igdd}
g(u-1)~\equiv~h(u-1) &\equiv& 0 \pmod{\alpha(K)},  ~\text{and} \\
\label{global-igdd}
(g^2-h^2)u(u-1) &\equiv& 0 \pmod{\beta(K)}.
\end{eqnarray}
\end{prop}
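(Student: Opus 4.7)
The plan is a standard double-counting/replication argument, following exactly the template of Propositions \ref{neccond} and \ref{neccond-h}, but carefully tracking the two kinds of points (hole vs.\ non-hole).

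For the \emph{local} condition, I would fix a point $x \in V_i$ and count the number of other points sharing a block with $x$. If $x \in V_i \setminus W_i$, then $x$ must be paired via a (unique) block with every point of $V \setminus V_i$ (including the hole points of other groups, since $x$ itself is not in a hole), giving $g(u-1)$ partners. If $x \in W_i$, then $x$ is paired only with points of $V_j \setminus W_j$ for $j \neq i$, giving $(g-h)(u-1)$ partners. In either case the number of partners equals $\sum_{B \ni x}(|B|-1)$, and since each summand $|B|-1$ is a multiple of $\alpha(K)$, so is the total. This yields $g(u-1) \equiv 0$ and $(g-h)(u-1) \equiv 0 \pmod{\alpha(K)}$, which together give the stated congruences $g(u-1) \equiv h(u-1) \equiv 0 \pmod{\alpha(K)}$.

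For the \emph{global} condition, I would count in two ways the number of point-pairs covered by blocks. On the one hand, a pair is covered iff its two points lie in different groups and are not both in holes; the number of such pairs is
\[
\binom{u}{2}g^2 - \binom{u}{2}h^2 = \tfrac{1}{2}(g^2-h^2)u(u-1).
\]
On the other hand, each block $B \in \cB$ covers $\binom{|B|}{2}$ pairs, and $2\binom{|B|}{2}=|B|(|B|-1)$ is a multiple of $\beta(K)$ by definition of $\beta$. Summing over blocks and doubling the other count yields $(g^2-h^2)u(u-1) \equiv 0 \pmod{\beta(K)}$.

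There is no real obstacle here; the only subtlety worth highlighting is that in the local replication count one must separately handle the two point types and then combine the resulting divisibilities to recover the congruence on $h(u-1)$ (rather than just $(g-h)(u-1)$). Everything else is a direct analogue of the PBD and IPBD arguments already recorded in the paper.
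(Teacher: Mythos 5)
Your proof is correct: the paper states this proposition without proof (it is the routine analogue of Propositions~\ref{neccond} and \ref{neccond-h}), and your double count — replication at a non-hole point giving $g(u-1)$, at a hole point giving $(g-h)(u-1)$, combined to get $h(u-1)\equiv 0 \pmod{\alpha(K)}$, plus the global pair count $\tbinom{u}{2}(g^2-h^2)=\sum_{B}\tbinom{|B|}{2}$ — is exactly the standard argument being invoked. The only caveat, mirroring the paper's exclusion of $w=v$ for IPBDs, is the degenerate case $g=h$ (no blocks), which should be excluded or handled trivially.
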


Also, by analogy with Proposition~\ref{holesize-bound} we have
\begin{equation}
\label{ineq-igdd}
g \ge (k-1)h,
\end{equation}
where again $k=\min K$.

From the method of edge-colored graph decompositions in \cite{LW}, it is
possible to get a `large $u$'  existence theory for uniform IGDDs.  The
proof is a straightforward extension of the argument used to prove
Theorem~\ref{asym-gdd}; details can be found in \cite{CvB}.

\begin{thm}[\cite{CvB}]
\label{asym-igdd}
Let $k = \min K$.  Given integers $g,h$ with $g \ge (k-1)h$, there exists an
IGDD$((g;h)^u,K)$ whenever $u$ is sufficiently large satisfying {\rm
(\ref{local-igdd})} and {\rm (\ref{global-igdd})}.
\end{thm}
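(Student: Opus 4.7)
The plan is to model the existence of an IGDD$((g;h)^u, K)$ as an edge-coloured clique decomposition problem and invoke the asymptotic decomposition theorem of Lamken--Wilson from \cite{LW}, imitating the template that yields Theorem~\ref{asym-gdd} from that machinery. Take a vertex set partitioned into $u$ groups $V_1,\dots,V_u$ of size $g$, each with a distinguished hole $W_i \subset V_i$ of size $h$. The target graph has as its edges precisely the cross-group pairs not contained in the union of holes, and each edge is coloured according to the pair of ``hole'' and ``non-hole'' labels of its endpoints. Each admissible $k$-clique (with $k \in K$, meeting each group in at most one point) is classified into a \emph{type} by its intersection pattern with the $W_i$'s. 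An IGDD$((g;h)^u,K)$ is then exactly a decomposition of the coloured target graph into coloured cliques of the admissible types.

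The Lamken--Wilson theorem then yields such a decomposition for all sufficiently large $u$, subject to two hypotheses: the natural divisibility (integrality) conditions, and the existence of a nonnegative rational ``fractional'' decomposition of the target into the allowed coloured clique types. The divisibility conditions fall out of a routine lattice calculation and unwind to exactly \eqref{local-igdd} and \eqref{global-igdd}, matching Proposition~\ref{neccond-igdd}; no new arithmetic work is required here.

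The substantive step, and the one I expect to be the principal obstacle, is producing the fractional solution. One must assign nonnegative rational weights to the clique types so that the weighted totals cover each edge colour the correct number of times. Hole--nonhole edges can only be covered by cliques meeting at least one hole, and the counting argument underlying Proposition~\ref{holesize-bound} shows that balancing hole--nonhole and nonhole--nonhole edges simultaneously is feasible precisely when $g \ge (k-1)h$: each non-hole point needs enough blocks through it to reach every hole-point in every other group, and this is exactly the room guaranteed by \eqref{ineq-igdd}. One can average any such weighting over the natural action permuting groups, so that a single symmetric fractional solution suffices. With this fractional decomposition in hand, \cite{LW} delivers an integer decomposition, hence an actual IGDD$((g;h)^u, K)$, for all sufficiently large $u$ satisfying \eqref{local-igdd} and \eqref{global-igdd}, completing the proof.
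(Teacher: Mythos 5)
Your strategy coincides with the paper's: the paper itself gives no proof of Theorem~\ref{asym-igdd}, quoting it from the thesis \cite{CvB} with the remark that it follows from the edge-coloured graph decomposition method of \cite{LW} by a straightforward extension of the argument for Theorem~\ref{asym-gdd}. So the plan---encode IGDDs as edge-coloured clique decompositions, check the lattice (divisibility) conditions, and verify a fractional decomposition, with $g \ge (k-1)h$ entering only in the last step---is exactly the intended one.

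There is, however, a genuine misstep in how you instantiate the Lamken--Wilson theorem. You take the vertex set to be all $gu$ points and the target to be the graph of cross-group pairs not lying in two holes, coloured by hole/non-hole labels of the endpoints. The theorem of \cite{LW} decomposes an edge-coloured \emph{complete} graph on $n$ vertices in which every pair of vertices carries the same prescribed multiset of colours, with $n \to \infty$; your point-level target is not of that form (pairs inside a group, and hole--hole pairs, carry no edge at all, so the requirement is not uniform over pairs), and in the theorem $g,h$ are fixed while only $u$ grows. The correct encoding---the one behind Theorem~\ref{asym-gdd} in \cite{Draganova,Liu} and the ``straightforward extension'' meant in \cite{CvB}---takes the $u$ groups themselves as the vertices of $K_u$, with colour set $([g]\times[g])\setminus([h]\times[h])$ recording which pair of positions (say the first $h$ positions in each group being hole positions) a cross pair occupies, one edge of each colour on every pair of groups; the family consists of the $k$-cliques, $k \in K$, obtained by choosing a position in each group, at most one of them a hole position. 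With that setup the lattice conditions unwind to (\ref{local-igdd}) and (\ref{global-igdd}), and the fractional (cone) condition becomes, after the symmetrization you describe, an explicit two-constraint feasibility check that your sketch asserts but does not perform: a block contains at most one hole point; a size-$k$ block with one hole point covers $k-1$ hole--nonhole and $\binom{k-1}{2}$ nonhole--nonhole colours, so covering the $2h(g-h)$ hole--nonhole colours per pair of groups forces at least $\tfrac{k-2}{2}\cdot 2h(g-h)$ of the $(g-h)^2$ nonhole--nonhole colours, with $k=\min K$ optimal, and this fits precisely when $(k-2)h \le g-h$, i.e.\ $g \ge (k-1)h$. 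Since this is the one place the hypothesis is used, it should be written out; note also that at equality $g=(k-1)h$ the fractional solution is forced onto the boundary of the cone (only one-hole blocks of size $k$ can be used), so the exact admissibility hypotheses of \cite{LW} need a word of care there.
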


The following constructions produce IPBDs from GDDs and IGDDs.  In the first
case, all but one group of a GDD gets filled, and in the second case, the
holes in an IGDD get merged into a larger hole.

\begin{lemma} \label{const:gdd-ipbd}
Suppose there exists a GDD$(T,K)$ on $v$ points and let one of its groups
have size $a$.  If, for all other group sizes $g$ in $T$, there exists an
IPBD$((g+i; i),K)$, then there exists an IPBD$((v+i;a+i);K)$.
\end{lemma}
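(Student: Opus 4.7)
The plan is to adjoin $i$ fresh auxiliary points to the given GDD, fill each non-distinguished group together with the new points using an ingredient IPBD, and declare the union of the distinguished group with the $i$ new points to be the hole.

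In detail, let $(V,\Pi,\cB_0)$ be the GDD$(T,K)$ with groups $\Pi=\{V_1,\dots,V_u\}$, where $|V_1|=a$ and $|V_j|=g_j$ for $j\ge 2$.  Introduce a set $I$ of $i$ new points disjoint from $V$, and put $W:=V_1\cup I$, so $|V\cup I|=v+i$ and $|W|=a+i$.  For each $j\ge 2$, use the hypothesized IPBD$((g_j+i;i),K)$ realised on point set $V_j\cup I$ with hole $I$; call its block set $\cB_j$.  Then set $\cB := \cB_0 \cup \cB_2\cup\cdots\cup\cB_u$, and let the design be $(V\cup I,W,\cB)$.

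The verification is a brief case analysis on an unordered pair $\{x,y\}$ of distinct points.  If $\{x,y\}\subseteq W$, then $\{x,y\}$ lies in $V_1$, in $I$, or straddles $V_1$ and $I$; no block of $\cB_0$ meets $I$ and no block of $\cB_j$ for $j\ge 2$ meets $V_1$, and the pairs inside $I$ are uncovered by each $\cB_j$ (since $I$ is a hole there), so $\{x,y\}$ is not covered at all.  If $x\in V_j$, $y\in V_k$ for distinct $j,k\ge 2$, or $x\in V_j$ for $j\ge 2$ and $y\in V_1$, the pair is covered exactly once in $\cB_0$ (and in no $\cB_\ell$, which does not meet both of these groups).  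If $x,y\in V_j$ for some $j\ge 2$, or $x\in V_j$ and $y\in I$ for some $j\ge 2$, the pair is covered exactly once in $\cB_j$ (and in no other $\cB_\ell$ nor in $\cB_0$).  All block sizes lie in $K$ by hypothesis, so we obtain the desired IPBD$((v+i;a+i),K)$.

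There is no substantive obstacle: the argument is a standard "filling" construction, and the only point requiring care is that the ingredient IPBDs on the various $V_j\cup I$ share precisely the common hole $I$, so that pairs inside $I$ are uncovered and pairs between $I$ and any $V_j$ ($j\ge 2$) are covered exactly once.
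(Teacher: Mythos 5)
Your construction is correct and is exactly the standard ``adjoin $i$ new points, fill every non-distinguished group with an IPBD having hole $I$, and take the distinguished group plus $I$ as the hole'' argument that the paper takes as read (it states Lemma~\ref{const:gdd-ipbd} without proof). Your case analysis covers all pair types and verifies the hole and exact-once conditions, so nothing is missing.
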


\begin{lemma} \label{const:igdd-ipbd}
Suppose there exists an IGDD$(T, K)$ on $v$ points with $w$ hole points in
total.  If, for each $(g; h)$ in $T$, there exists an IPBD$((g + i; h + i),
K)$, then there exists an IPBD$((v + i; w + i), K)$.
\end{lemma}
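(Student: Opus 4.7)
The plan is to adjoin a fresh set $I$ of $i$ new ``infinite'' points to the point set of the IGDD and then fill each group together with $I$ using the hypothesized IPBDs. Write the given IGDD as $(V,\Pi,\Xi,\cB_0)$ with groups $V_1,\dots,V_u$ of sizes $g_1,\dots,g_u$ and sub-holes $W_j\subseteq V_j$ of sizes $h_j$, so that $v=\sum_j g_j$ and $w=\sum_j h_j$. For each $j$, take an IPBD$((g_j+i;h_j+i),K)$ realized on point set $V_j\cup I$ with hole $W_j\cup I$, and call its block set $\cB_j$. The proposed structure is $(V\cup I,\,W\cup I,\,\cB_0\cup\bigcup_{j=1}^u \cB_j)$, where $W:=\bigcup_j W_j$.

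To verify that this is an IPBD$((v+i;w+i),K)$, I would check pair coverage by cases on where two distinct points $x,y\in V\cup I$ lie. If both are in $I$, they sit inside the hole $W_j\cup I$ of every $\cB_j$, so no block covers them; this agrees with $\{x,y\}\subseteq W\cup I$. If $x\in I$ and $y\in V_j$, the pair is covered exclusively by $\cB_j$ (once when $y\notin W_j$, never when $y\in W_j$), matching whether or not $\{x,y\}\subseteq W\cup I$. If $x,y$ both lie in a common $V_j$, again only $\cB_j$ can cover them, and does so precisely when at least one falls outside $W_j$. Finally, if $x\in V_i$ and $y\in V_j$ with $i\neq j$, then no $\cB_{j'}$ covers the pair (since $x\notin V_j\cup I$ and $y\notin V_i\cup I$), while $\cB_0$ covers it exactly when at least one of $x,y$ is outside its respective sub-hole. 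In every case the coverage agrees with the specification of an IPBD with hole $W\cup I$.

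There is no real obstacle: this is a standard ``filling'' move, parallel in spirit to Lemma~\ref{const:gdd-ipbd}. The only subtle point is that pairs entirely within $I$ must remain uncovered, which is automatic because $I$ is contained in the hole $W_j\cup I$ of every ingredient IPBD; that is precisely why the new points are adjoined to all the holes at once.
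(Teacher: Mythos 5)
Your construction is correct and is exactly the standard ``fill each group plus $i$ new points, merging all sub-holes with $I$'' argument that the paper has in mind (the paper states this lemma without proof, describing it only as merging the IGDD holes into one larger hole). The case analysis you give covers all pair types and matches the IPBD requirements, so nothing is missing.
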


The foregoing designs and constructions are enough to prove a preliminary
common generalization of Theorems~\ref{asym-pbd} and \ref{ipbd-k}.  This is
our topic in the next two sections.

\section{Incomplete designs with fixed hole size}

Our purpose here is to prove the following `large $v$' result for
IPBD$((v;w),K)$.
This is the first major step toward the `two parameter' result we desire.

\begin{thm} \label{thm:ipbd:K:fixed-w}
Given $w \equiv 1 \pmod{\alpha(K)}$, there exist IPBD$((v; w), K)$ for all
sufficiently large $v$ satisfying $(\ref{local})$ and $(\ref{global})$.
\end{thm}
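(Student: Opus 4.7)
The plan is to produce IPBD$((v;w),K)$ by filling a carefully chosen master group divisible design via Lemma~\ref{const:gdd-ipbd}. Set $k=\min K$, and pick a size $g$ large and in a suitable congruence class so that PBD$(g+1,K)$ exists by Wilson's Theorem~\ref{asym-pbd}. The target master is a GDD on $v-1$ points with one distinguished group of size $a=w-1$ and all other groups of size $g$. Applying Lemma~\ref{const:gdd-ipbd} with $i=1$ then fills each non-distinguished group with a PBD$(g+1,K)$ and returns IPBD$((v;w),K)$, with the hole formed from the distinguished group together with the adjoined infinity point.

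To build the master, I would use Wilson's fundamental construction (Lemma~\ref{wfc-lemma}). Start from a transversal design TD$(s,n)$ with $s\ge k+1$ and $n\ge w-1$, truncate the last group to $w-1$ points, then assign weight $g$ to each non-truncated point and weight $1$ to each surviving truncated point. The ingredient GDDs required---of types $g^s$ and $g^{s-1}1^1$ with block sizes in $K$---are provided by Theorem~\ref{asym-gdd} (for the first, with $s$ large in a suitable residue) and by a truncated transversal design or similar small construction (for the second). Wilson's construction then yields a GDD$((gn)^{s-1}(w-1)^1, K)$ on $(s-1)gn+w-1$ points, and the filling step above produces IPBD$(((s-1)gn+w;\, w), K)$.

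The principal obstacle is to cover every sufficiently large admissible $v$, not only those of the form $(s-1)gn+w$ for a fixed $s$. The admissibility conditions $v\equiv 1\pmod{\alpha(K)}$ and $v(v-1)\equiv w(w-1)\pmod{\beta(K)}$ single out finitely many residue classes of $v$ modulo $\lcm(\alpha(K),\beta(K))$, while a single run of the above construction places $v$ in just one such class. I would exhaust all admissible residues by running several parallel versions of the construction, varying $s$, the truncation amount (inflated from $w-1$ by multiples of $\alpha(K)$), and the shift parameter $i$ in Lemma~\ref{const:gdd-ipbd}, and combining their outputs via a Chinese Remainder style argument on residues. Tracking the divisibility hypotheses on the ingredient GDDs and PBDs so that Theorems~\ref{asym-pbd} and \ref{asym-gdd} apply, and matching them to the admissibility conditions on $v$, is where I expect the bulk of the proof's technical work to lie---together with verifying that the non-uniform ingredient GDDs of type $g^{s-1}1^1$ exist for the required parameters.
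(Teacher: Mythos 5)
Your construction reproduces, in essence, the paper's Proposition~\ref{prop:ipbd:class1}: a nearly uniform master GDD with one group of size $w-1$, all large groups filled with PBDs via Lemma~\ref{const:gdd-ipbd} with $i=1$. That step only yields $v$ with $v-w=(s-1)gn$ where $gn\equiv 0\pmod{\alpha(K)}$ and $gn(gn+1)\equiv 0\pmod{\beta(K)}$ (so that PBD$(gn+1,K)$ can exist), i.e.\ it reaches only a restricted family of residue classes of $v$ relative to the fixed $w$. You correctly identify that covering \emph{every} admissible class is the principal obstacle, but the knobs you propose for it do not work. Varying the shift $i$ in Lemma~\ref{const:gdd-ipbd} requires ingredient designs IPBD$((G+i;i),K)$ for large $G$, which is exactly the statement being proved (with hole size $i$); only $i=1$, i.e.\ Wilson's theorem, is available at this stage, so this is circular. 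Inflating the truncation beyond $w-1$ changes the hole to some $w'>w$, and this cannot be undone: an IPBD$((w';w),K)$ with $w'$ close to $w$ is ruled out by the inequality of Proposition~\ref{holesize-bound}. And ``combining several parallel runs by a Chinese Remainder argument'' is not a mechanism: each run outputs a single design with a single $v$, so CRT can only be used inside the parameter choice of one construction, not to merge outputs.

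There are also concrete failures inside the template itself. Since the truncated TD has blocks of two sizes, you need both GDD$(g^{s-1},K)$ and GDD$(g^{s-1}1^1,K)$ as WFC ingredients; for $K=\{3\}$ their local conditions demand $g(s-2)$ even and $g(s-2)+1$ even respectively, which is impossible, so the weighting scheme (weights $g$ and $1$) never runs for triples, let alone hits the admissible class $w\equiv 3$, $v\equiv 1\pmod 6$ that the theorem must cover. Moreover GDD$(g^{s-1}1^1,K)$ is not obtained from ``a truncated transversal design'': that object has blocks of sizes $s-1,s\notin K$ in general; one needs a nearly uniform asymptotic existence result such as Lemma~\ref{lem:nugdd:asym}. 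The paper's proof supplies precisely the machinery your sketch is missing: a second class of IPBDs with $v\equiv 1-w\pmod{\gamma}$ built from resolvable designs (Proposition~\ref{prop:ipbd:class2}); uniform IGDDs (Theorem~\ref{asym-igdd}) whose parameters $u$ and $h$ are chosen by an explicit CRT computation so that filling them with the second-class IPBDs hits an example in \emph{every} admissible pair of congruence classes mod $M$ (Proposition~\ref{prop:ipbd:class-all}); and a final double hole-filling in which constructions of your type serve only as the bridges IPBD$((v;v_2),K)$ and IPBD$((w_2;w),K)$ with $v\equiv v_2$, $w_2\equiv w\pmod M$. Without substitutes for those steps, the proposal proves only a fragment of the theorem.
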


For the proof, we use IGDDs along with two different classes of IPBDs.  The
first of these classes has $v \equiv w$ modulo a large multiple of
$\beta(K)$, and its proof is an easy application of `nearly uniform' GDDs.
In what follows, for convenience we write $\alpha:=\alpha(K)$, $\beta:=\beta(K)$, and $\gamma:=\beta/\alpha$.  It is easy to see that
$\gamma$ is an integer coprime with $\alpha$.

\begin{lemma}[See \cite{DLL2,CvB}] \label{lem:nugdd:asym}
For any $m \gg 0$ with $m \equiv 0 \pmod{\gamma}$ and any $t \equiv 0
\pmod{\alpha}$, there exists a GDD$(s^m t^1, K)$ for all sufficiently
large integers $s \equiv 0 \pmod{\alpha}$.
\end{lemma}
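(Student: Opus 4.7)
The plan is to apply Wilson's fundamental construction (Lemma~\ref{wfc-lemma}) with a transversal design as master and uniform GDDs as ingredients. Since $m$ and $t$ are fixed while $s$ tends to infinity, I would set $N := s/\alpha$ and use a TD$(m+1, N)$ as the master. Such a TD exists for all sufficiently large $s$: with $m$ regarded as a constant, the bound $N(n) \ge n^{1/14.8}$ recalled in the introduction yields $N(N) \ge m-1$ once $N$ is large enough, which is exactly the MOLS condition needed for the master TD.

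With groups $V_1, \ldots, V_{m+1}$ of the master TD in hand, I would assign weights as follows: every point of $V_1, \ldots, V_m$ receives weight $\alpha$, and inside $V_{m+1}$---which has $N = s/\alpha \ge t/\alpha$ points once $s \ge t$---pick any $t/\alpha$ points and give them weight $\alpha$, with the remaining points of $V_{m+1}$ receiving weight $0$. Each block $B$ of the master meets $V_{m+1}$ in exactly one point, so Lemma~\ref{wfc-lemma} demands as an ingredient on $B$ either a GDD$(\alpha^{m+1}, K)$ (if that point has weight $\alpha$) or a GDD$(\alpha^m, K)$ (if it has weight $0$).

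Both ingredients will be supplied by Theorem~\ref{asym-gdd}. For GDD$(\alpha^u, K)$, condition (\ref{local-gdd}) reads $\alpha(u-1) \equiv 0 \pmod{\alpha}$ and is automatic, while (\ref{global-gdd}) reads $\alpha^2 u(u-1) \equiv 0 \pmod{\alpha \gamma}$ and---using $\gcd(\alpha, \gamma) = 1$---reduces to $u(u-1) \equiv 0 \pmod{\gamma}$, which holds for both $u = m$ and $u = m+1$ because $\gamma \mid m$. Hence, as soon as $m$ exceeds the threshold in Theorem~\ref{asym-gdd} for $u \in \{m, m+1\}$---which is exactly the content of the hypothesis ``$m \gg 0$''---both ingredients are available. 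Applying Lemma~\ref{wfc-lemma} then yields a GDD whose first $m$ groups each have size $N\alpha = s$, whose last group has size $(t/\alpha)\alpha = t$, and whose block sizes lie in $K$; this is the desired GDD$(s^m t^1, K)$.

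The only slightly delicate point in this plan is guaranteeing a master TD$(m+1, N)$ when $N = s/\alpha$ is an arbitrary integer (not necessarily a prime power). This is handled uniformly by Beth's lower bound on $N(n)$, valid for all $N$ sufficiently large relative to $m$. All remaining steps reduce to divisibility bookkeeping and citations of the asymptotic existence theorems already in place.
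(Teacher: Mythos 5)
Your construction is correct and complete: taking a TD$(m+1,N)$ with $N=s/\alpha$ as the master (available for large $N$ since $m$ is fixed), giving weight $\alpha$ to all points except for weight $0$ on all but $t/\alpha$ points of the last group, and supplying the ingredients GDD$(\alpha^m,K)$ and GDD$(\alpha^{m+1},K)$ via Theorem~\ref{asym-gdd} --- where the divisibility checks reduce, using $\gcd(\alpha,\gamma)=1$, to $\gamma \mid m(m-1)$ and $\gamma \mid m(m+1)$, both guaranteed by $\gamma \mid m$ --- is exactly the standard truncated-transversal-design argument. The paper itself gives no proof, citing \cite{DLL2,CvB}, and those sources prove the lemma in essentially this same way, so your proposal matches the intended proof.
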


Now let $M := m \beta$, where $m$ satisfies the conditions of
Lemma~\ref{lem:nugdd:asym}.  Our first class of IPBDs results from filling
groups; the small group becomes the hole and the other groups are filled
with PBDs.

\begin{prop} \label{prop:ipbd:class1}
For any $w \equiv 1 \pmod{\alpha}$, there exist IPBD$((v; w), K)$ for all
sufficiently large $v \equiv w \pmod{M}$.
\end{prop}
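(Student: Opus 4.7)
The plan is to build the required IPBD by applying the fill-in-groups construction (Lemma~\ref{const:gdd-ipbd}, with $i=1$) to a nearly uniform GDD supplied by Lemma~\ref{lem:nugdd:asym}. Concretely, for $v \equiv w \pmod{M}$ sufficiently large, I would set $s := (v-w)/m$ and construct a GDD of type $s^m\,(w-1)^1$ with block sizes in $K$. Adjoining a single ideal point, the distinguished group of size $w-1$ together with this point becomes the hole of size $w$, while each of the $m$ groups of size $s$ together with the ideal point calls for a PBD$(s+1, K)$, to be supplied by Theorem~\ref{asym-pbd}.

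I would then check the divisibility requirements at each step. For Lemma~\ref{lem:nugdd:asym} we need $w-1 \equiv 0 \pmod{\alpha}$, which is immediate from $w \equiv 1 \pmod{\alpha}$, and $s \equiv 0 \pmod{\alpha}$ for $s$ large. Since $M = m\beta = m\alpha\gamma$, the hypothesis $v \equiv w \pmod{M}$ forces $s$ to be a multiple of $\beta$, hence of $\alpha$. For Theorem~\ref{asym-pbd} applied to the filling PBD$(s+1,K)$, we need $s \equiv 0 \pmod{\alpha}$ and $s(s+1) \equiv 0 \pmod{\beta}$; both follow at once from $\beta \mid s$. Finally, an IPBD$((s+1;1),K)$ coincides with a PBD$(s+1,K)$, so the hypothesis of Lemma~\ref{const:gdd-ipbd} is indeed satisfied and the construction produces an IPBD$((ms+w;\,w),K)= \text{IPBD}((v;w),K)$.

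The only subtle point is the choice of modulus. The GDD in Lemma~\ref{lem:nugdd:asym} exists whenever $s$ is just a multiple of $\alpha$, but one cannot then invoke Theorem~\ref{asym-pbd} to build a PBD$(s+1,K)$ without also knowing $\beta \mid s(s+1)$. Forcing $s$ itself to be a multiple of $\beta$---which is precisely what the factor $\gamma = \beta/\alpha$ inside $M = m\beta$ accomplishes---handles both divisibility requirements simultaneously. Since $m$ is fixed once and for all, the two ``sufficiently large $s$'' thresholds (one from Lemma~\ref{lem:nugdd:asym}, one from Theorem~\ref{asym-pbd}) collapse into a single lower bound on $v$, and the argument is complete.
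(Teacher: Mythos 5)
Your proposal is correct and follows essentially the same route as the paper: write $v-w$ as $m$ copies of $s=a\beta$, take the GDD$(s^m(w-1)^1,K)$ from Lemma~\ref{lem:nugdd:asym}, fill the large groups (plus one ideal point) with PBD$(s+1,K)$ from Theorem~\ref{asym-pbd}, and apply Lemma~\ref{const:gdd-ipbd} with $i=1$. The divisibility checks you spell out (that $M=m\beta$ forces $\beta\mid s$, covering both the GDD and the filling PBD) are exactly the role the modulus $M$ plays in the paper's argument, just stated more explicitly there than here.
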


\begin{proof}
Let $v - w = a M = a m \beta$.  We assume $a$ is large enough such that
there exists both a GDD$((a \beta)^m (w - 1)^1, K)$ by
Lemma~\ref{lem:nugdd:asym} and a PBD$(a \beta + 1, K)$ by
Theorem~\ref{asym-pbd}.  Finally, use Lemma~\ref{const:gdd-ipbd} with $i=1$
to obtain an IPBD$((v; w), K)$.
\end{proof}

We now construct a second class of incomplete pairwise balanced designs.
Here, the parameters are such that $v \equiv 1 - w \pmod{\gamma}$.  Our
approach is to start with an appropriate resolvable pairwise balanced design
using a single block size, and then fill each of the blocks using block
sizes in $K$.

\begin{prop} \label{prop:ipbd:class2}
Given $K$, a modulus $M = m \beta$, and an admissible congruence
class $w_0 \pmod{M}$ for incomplete pairwise balanced designs with block
sizes in $K$, there exists an IPBD$((v; w_1), K)$ with $w_1 \equiv w_0
\pmod{M}$ and $v \equiv 1 - w_1 \pmod{\gamma}$.
\end{prop}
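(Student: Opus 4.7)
The plan is to build the desired IPBD by starting from a resolvable PBD on a single block size $k$, adjoining one new ``infinity'' point per parallel class, and then filling each of the resulting blocks of size $k+1$ with a PBD whose block sizes lie in $K$. The two key asymptotic ingredients are Theorem~\ref{thm:rpbd:asym} for the resolvable design and Theorem~\ref{asym-pbd} for the fillings.

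Concretely, I would first fix a large integer $k$ with $\alpha \mid k$ and $\beta \mid k(k+1)$, so that Theorem~\ref{asym-pbd} guarantees a PBD$(k+1,K)$. Theorem~\ref{thm:rpbd:asym} then produces a resolvable PBD$(v',\{k\})$ for all sufficiently large $v' \equiv k \pmod{k(k-1)}$. Letting its parallel classes be $P_1,\dots,P_r$ with $r=(v'-1)/(k-1)$, I introduce $r$ fresh points $\infty_1,\dots,\infty_r$ and, for each $B \in P_i$, form the $(k+1)$-block $B \cup \{\infty_i\}$. Replacing each such $(k+1)$-block by a copy of PBD$(k+1,K)$ on the same points yields an IPBD$((v'+r;r),K)$ whose hole is $\{\infty_1,\dots,\infty_r\}$: every pair within $V'$ is covered exactly once by the original resolvable PBD, each pair $(x,\infty_i)$ lies in the unique $(k+1)$-block through both points and hence in exactly one block of its filling, and pairs within the hole are never covered.

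It remains to match the prescribed congruences. Writing $w_1 := r$ and $v := v'+r = kr+1$, the target $w_1 \equiv w_0 \pmod{M}$ becomes a linear condition on $v'$, while $v \equiv 1 - w_1 \pmod{\gamma}$ is equivalent to $\gamma \mid (k+1)r$. Once $w_1 \equiv w_0 \pmod{M}$ has been imposed, the second condition reduces to $\gamma \mid (k+1)w_0$, which can be arranged by selecting the residue of $k+1$ modulo $\gamma$ appropriately; this is permitted by the freedom $\alpha \mid k$, $\beta \mid k(k+1)$, and $\gcd(\alpha,\gamma)=1$, together with the admissibility $w_0 \equiv 1 \pmod{\alpha}$.

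The main obstacle is precisely this arithmetic coordination: choosing $k$ so that the asymptotic PBD and resolvable PBD theorems both apply, while simultaneously admitting some large $r \equiv 1 \pmod{k}$ with $r \equiv w_0 \pmod{M}$ and $\gamma \mid (k+1)r$. The accounting requires decomposing $\gamma$ into a part dividing $k$ and a part dividing $k+1$, and verifying compatibility with the admissibility of $w_0$; everything else in the proof is a routine check that the construction covers pairs as required.
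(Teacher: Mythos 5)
Your construction is exactly the paper's: the paper takes $k=q\alpha$, views the resolvable PBD$(w_1(q\alpha-1)+1,\{q\alpha\})$ together with one new point per parallel class as an IPBD$((w_1q\alpha+1;w_1),\{q\alpha+1\})$, and fills each block with a PBD$(q\alpha+1,K)$; your infinity-point description and the bookkeeping $v=kr+1$, $v\equiv 1-w_1 \pmod{\gamma}$ iff $\gamma \mid (k+1)r$ match this step for step, and the combinatorial part of your argument (pairs covered exactly once, hole pairs never covered) is correct.

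The gap is in the arithmetic coordination that you yourself flag as ``the main obstacle'' and leave unexecuted; as stated, your constraints on $k$ are not sufficient. You need a large $r$ satisfying $r\equiv 1\pmod{k}$ (this is what $v'\equiv k\pmod{k(k-1)}$ in Theorem~\ref{thm:rpbd:asym} amounts to) and $r\equiv w_0\pmod{M}$ simultaneously, which requires $\gcd(k,M)\mid w_0-1$; admissibility only gives $w_0\equiv 1\pmod{\alpha}$, so you must force $\gcd(k,M)=\alpha$. Nothing in ``$\alpha\mid k$, $\beta\mid k(k+1)$, prescribed residue of $k+1$ modulo $\gamma$'' prevents $k$ from sharing a prime of $M=m\beta$ beyond $\alpha$ (for instance a prime of $m$), and then no admissible $r$ need exist: e.g.\ $K=\{3\}$, $M=90$, $k=20$ forces $10\mid w_0-1$, which admissibility does not give. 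The paper repairs exactly this by writing $k=q\alpha$ with $\gcd(q,M)=1$ and $q\alpha\equiv -1\pmod{\gamma}$ (possible by CRT since $\gcd(\alpha,\gamma)=1$), which also disposes of your proposed ``decomposition of $\gamma$ into a part dividing $k$ and a part dividing $k+1$'': a nontrivial part $\gamma_1\mid k$ would be coprime to $k+1$ and hence force $\gamma_1\mid w_0$ in your condition $\gamma\mid(k+1)w_0$, which is not guaranteed, whereas putting all of $\gamma$ into $k+1$ makes $\gamma\mid (k+1)r$ automatic and makes $\beta\mid k(k+1)$ free. With that choice of $k$ your argument closes and coincides with the paper's proof.
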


\begin{proof}
Choose an integer $q \gg 0$ such that $\gcd(q, M) = 1$, $q \alpha + 1
\equiv 0 \pmod{\gamma}$, and there exists a PBD$(q \alpha + 1, K)$,
whose existence follows from Theorem~\ref{asym-pbd}.  Since $q$ and $M$ are
coprime, $q \alpha$ and $M$ have only the common factor $\alpha$, and
hence it follows from the Chinese remainder theorem that we can choose a
$w_1 \gg 0$ such that $w_1 \equiv w_0 \pmod{M}$ and $w_1 \equiv 1 \pmod{q
\alpha}$ and such that there exists a resolvable PBD$(w_1 (q \alpha -
1) + 1, q \alpha)$ by Theorem~\ref{thm:rpbd:asym}.  It follows that there
exists an IPBD$((w_1 q \alpha + 1; w_1), q \alpha + 1)$. 
Replacing each block with a PBD$(q \alpha+1,K)$ 
results in an IPBD$((w_1 q \alpha + 1; w_1), K)$ with $v = w_1
q \alpha + 1 \equiv 1 - w_1 \pmod{\gamma}$ as required.
\end{proof}

We can now fill groups of IGDDs to find an example incomplete pairwise
balanced design for each admissible congruence class of $v$ and $w$.
The following extends a similar argument in \cite{DLL2}.

\begin{prop} \label{prop:ipbd:class-all}
Given $K$, a positive modulus $M = m \beta$, and admissible congruence
classes $v_0, w_0 \pmod{M}$ for incomplete pairwise balanced designs with
block sizes in $K$, there exists an IPBD$((v_2; w_2), K)$ for some $v_2
\equiv v_0$ and $w_2 \equiv w_0 \pmod{M}$.
\end{prop}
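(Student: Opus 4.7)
The plan is to combine the Class~2 IPBD of Proposition~\ref{prop:ipbd:class2} with a uniform GDD inflation, filled via Lemma~\ref{const:igdd-ipbd}. Proposition~\ref{prop:ipbd:class2} already delivers an IPBD$((v^*; w^*), K)$ whose hole size satisfies $w^* \equiv w_0 \pmod{M}$ but whose total point count is controlled only modulo~$\gamma$. Since $M = m\alpha\gamma$, the outstanding task is to lift the $v$-residue from mod~$\gamma$ up to mod~$M$ while preserving the hole residue.

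By Theorem~\ref{asym-gdd}, for all sufficiently large $u$ in a suitable arithmetic progression, a GDD$((v^* - w^*)^u, K)$ exists. Viewing this as a (trivial) IGDD with empty holes and applying Lemma~\ref{const:igdd-ipbd} with the Class~2 IPBD as the filling ingredient (that is, taking $i = w^*$) produces an IPBD$((u(v^* - w^*) + w^*;\ w^*), K)$. The hole size automatically matches $w_0$ modulo~$M$, so the construction reduces to choosing $u$ so that
\[
u(v^* - w^*) \equiv v_0 - w_0 \pmod{M}.
\]

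The main obstacle is to verify that this linear congruence in $u$ has a solution compatible with the arithmetic progression of $u$-values supplied by Theorem~\ref{asym-gdd}. Both $v^* - w^*$ and $v_0 - w_0$ are divisible by $\alpha$ (in the first case because $v^* \equiv w^* \equiv 1 \pmod{\alpha}$ from the construction of Proposition~\ref{prop:ipbd:class2}; in the second by local admissibility), so the problem reduces to solvability modulo $m\gamma$. The free parameter $q$ inside Proposition~\ref{prop:ipbd:class2} controls $(v^* - w^*)/\alpha$ modulo $m\gamma$, giving room to arrange a favorable gcd, and the global admissibility relation $v_0(v_0-1) \equiv w_0(w_0-1) \pmod{\beta}$ should be precisely what is needed for the resulting congruence to be consistent. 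A Chinese Remainder argument then yields a valid $u$, and hence $v_2 = u(v^* - w^*) + w^*$, while $w_2 = w^* \equiv w_0 \pmod{M}$ as required. Should any residue class prove unreachable by this single pass, a secondary application using the Class~1 IPBD of Proposition~\ref{prop:ipbd:class1} as a mixed filling ingredient (through a non-uniform GDD and Lemma~\ref{const:gdd-ipbd}) should cover the remaining cases.
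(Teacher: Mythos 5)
Your overall architecture (inflate a Class-2 ingredient by a uniform group-divisible structure and fill via Lemma~\ref{const:igdd-ipbd}) is close in spirit to the paper, but your version has only one adjustable parameter, $u$, and that is not enough. The hole residue comes for free, but you must then solve $u(v^*-w^*) \equiv v_0 - w_0 \pmod{M}$, and this congruence can be genuinely unsolvable. At primes $p \mid \gamma$ you have no control over $v^*-w^*$: since $w^* \equiv w_0 \pmod{M}$ fixes $w^*$ modulo $\gamma$, and Proposition~\ref{prop:ipbd:class2} forces $q\alpha \equiv -1 \pmod{\gamma}$ together with $\gcd(q,M)=1$, we get $v^*-w^* = w^*(q\alpha-1)+1 \equiv 1-2w_0 \pmod{\gamma}$ no matter how $q$ is chosen; so your claim that the free parameter $q$ ``controls $(v^*-w^*)/\alpha$ modulo $m\gamma$'' fails exactly at these primes. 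Concretely, take $K=\{81\}$, so $\alpha=80$, $\gamma=81$, and pick admissible classes with $v_0 \equiv w_0 \equiv 1 \pmod{80}$, $w_0 \equiv 14$ and $v_0 \equiv 23 \pmod{81}$; these are admissible because $(v_0-w_0)(v_0+w_0-1)\equiv 9\cdot 36 \equiv 0 \pmod{81}$. Then $v^*-w^* \equiv 14(q\alpha-1)+1 \equiv -27 \pmod{81}$, so $u(v^*-w^*)$ is always a multiple of $27$ modulo $81$, while $v_0-w_0 \equiv 9$ is not; no choice of $u$ satisfies your congruence even modulo $81$, let alone modulo $M$. Global admissibility only gives $(v_0-w_0)(v_0+w_0-1)\equiv 0 \pmod{\gamma}$, which is strictly weaker than the $p$-adic divisibility your linear congruence requires, so it is not ``precisely what is needed.''

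This is exactly why the paper does not use a plain GDD here: it uses a uniform IGDD$((g;h)^u,K)$ with a nontrivial internal hole size $h$, giving two degrees of freedom $(u,h)$, and solves for both simultaneously, prime power by prime power in $Mq$ (Table~\ref{tbl:ipbd:param}), afterwards verifying $(\ref{local-igdd})$ and $(\ref{global-igdd})$ so that Theorem~\ref{asym-igdd} applies. Your closing fallback --- that a ``secondary application'' with Class-1 ingredients through Lemma~\ref{const:gdd-ipbd} ``should cover the remaining cases'' --- is not a construction and cannot repair this: Class-1 designs have $v \equiv w \pmod{M}$ and supply no new residues. A smaller but real additional gap: even where your congruence is solvable, the residue class it forces on $u$ must be shown compatible with $(\ref{local-gdd})$ and $(\ref{global-gdd})$ needed for Theorem~\ref{asym-gdd}; admissibility of the final pair gives $ug(ug+2w^*-1)\equiv 0 \pmod{\beta}$, which is not the same as $g^2u(u-1)\equiv 0 \pmod{\beta}$, and you do not check it.
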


\begin{proof}
The incomplete pairwise balanced designs constructed in
Proposition~\ref{prop:ipbd:class2} can be used as ingredients in
Lemma~\ref{const:igdd-ipbd} to produce the remaining examples outside the
two classes previously considered, however, we will therefore require
certain conditions on $v$ and $w$.  In particular, if $q$ retains its value
from Proposition~\ref{prop:ipbd:class2}, where it is chosen independently of
$w_0$, then we must have $v \equiv w \equiv 1 \pmod{q}$.  Hence, we must
select classes $v_1$ and $w_1 \pmod{Mq}$ such that $v_1 \equiv v_0
\pmod{M}$, $v_1 \equiv 1 \pmod{q}$, $w_1 \equiv w_0 \pmod{M}$, and $w_1
\equiv 1 \pmod{q}$, which can be found by the Chinese remainder theorem as
$\gcd(q, M) = 1$.

Let the incomplete pairwise balanced designs found in
Proposition~\ref{prop:ipbd:class2} be denoted as IPBD$((x; y), K)$, where $x
= y q \alpha + 1$.  If we use the uniform incomplete group divisible
design IGDD$((g; h)^u, K)$, with $g - h = x - y$ and $y \ge h$, then
applying Lemma~\ref{const:igdd-ipbd} results in an IPBD$((g (u - 1) + x; h
(u - 1) + y), K)$.  Eliminating some dependent parameters, we have
\begin{align*}
v_2 - w_2 &= (g - h) (u - 1) + x - y = (x - y) u \\
 &= (y q \alpha + 1 - y) u = (w_2 - h (u - 1)) (q \alpha - 1) u + u.
\end{align*}
Or, rearranging,
\begin{equation}
\label{eq-uh}
u (u - 1) (q \alpha - 1) h = w_2 u (q \alpha - 1) + w_2 + u - v_2.
\end{equation}

Now, we wish to determine $u$ and $h$ such that $v_2 \equiv v_1$ and $w_2
\equiv w_1 \pmod{Mq}$.  Hence, it is sufficient to determine the required
congruence classes for $u$ and $h$.  In view of (\ref{eq-uh}), we solve the congruence
$$ u (u - 1) (q \alpha - 1) h \equiv w_1 u (q \alpha - 1) + w_1 + u -
v_1 \pmod{p^t} $$
for each prime power $p^t$ such that $p^t \exdiv Mq$.  
Now, we choose the solution
\begin{equation}
\label{u-choice}
u \equiv \begin{cases}
- (q \alpha - 1)^{-1} & \mbox{if $\gcd(p, q \alpha - 1) = 1$}, \\
(v_1 - w_1) (w_1 (q \alpha - 1) + 1)^{-1} & \mbox{otherwise.}
\end{cases}
\end{equation}
Since $p$ cannot divide two consecutive values, it follows that both
inverses in (\ref{u-choice}) exist when required.  If $\gcd(p, q \alpha - 1) = 1$, we  obtain 
$(u - 1) h \equiv v_1 - u$, and hence
\begin{equation}
\label{h-choice}
h \equiv -\frac{(q \alpha - 1) v_1 + 1}{q \alpha} \pmod{p^t}.
\end{equation}
Note that the fraction in (\ref{h-choice}) is well defined since $v_1$ admissible implies $v_1 \equiv 1  \pmod{\alpha}$, also
$v_1 \equiv 1 \pmod{q}$ as a result of
Proposition~\ref{prop:ipbd:class2}, and $\gcd(\alpha, q) = 1$ as
$\alpha \mid M$.  In the case $p \mid q \alpha - 1$, note that from (\ref{eq-uh}) and (\ref{u-choice}) we have 
$u(u - 1) (q \alpha - 1) h \equiv 0$, and hence $h \equiv 0$ suffices.  The Chinese remainder theorem gives a simultaneous solution for $u,h$, which we 
summarize in Table~\ref{tbl:ipbd:param}.

\begin{table}[htbp]
\begin{center}
\caption{\label{tbl:ipbd:param} Choice of Parameters to Obtain a Desired
Congruence Class}
\begin{tabular}{c|c|c}
& $\gcd(p, q \alpha - 1) = 1$ & $p \mid q \alpha - 1$ \\
\hline
$u \equiv$ & $-(q \alpha - 1)^{-1}$ & $(v_1 - w_1) ((q \alpha - 1) w_1
+ 1)^{-1}$ \\
$h \equiv$ & $-((q \alpha - 1) v_1 + 1) / q \alpha$ & $0$
\end{tabular}
\begin{align*}
y &= w_1 - h (u - 1) \\
x &= y q \alpha + 1 \\
g &= h + x - y
\end{align*}
\end{center}
\end{table}

We now verify that the required incomplete group divisible design exists by
Theorem~\ref{asym-igdd}.  Checking (\ref{local-igdd}), we have
$$ u - 1 \equiv - (q \alpha - 1)^{-1} - 1 \equiv \frac{-q \alpha}{q \alpha - 1} \equiv 0
\pmod{p^t}$$ for any $p^t \exdiv \alpha$, and so it follows that $u - 1 \equiv 0 \pmod{\alpha}$.

Checking (\ref{global-igdd}), we calculate
\begin{align*}
(g^2 - h^2) u (u - 1) &\equiv (g - h) u (g + h) (u - 1) \\
&\equiv (v_1 - w_1) (v_1 + w_1 - y (q \alpha + 1) - 1) \\
&\equiv (v_1 - w_1) (v_1 + w_1 - 1) \\
 & \equiv 0 \pmod{\gamma}.
\end{align*}

Thus, the required IGDD$((g; h)^u, K)$ exists provided $u$ is sufficiently
large.  Finally, Lemma~\ref{const:igdd-ipbd} results in an IPBD$((v_2;
w_2), K)$ hitting the desired congruence classes.
\end{proof}

We can now prove the desired result on incomplete pairwise
balanced designs with fixed hole size.

\begin{proof}[Proof of Theorem~\ref{thm:ipbd:K:fixed-w}]
Let $v$ be sufficiently large satisfying (\ref{local}) and (\ref{global}).
By Proposition~\ref{prop:ipbd:class-all}, there exists an IPBD$((v_2; w_2),
K)$ such that $v_2 \equiv v$ and $w_2 \equiv w \pmod{M}$.  We can assume $v
\gg v_2$ and $w_2 \gg w$ so that there exist both an IPBD$((v; v_2), K)$ and
an IPBD$((w_2; w), K)$ by Proposition~\ref{prop:ipbd:class1}.  Then an
IPBD$((v; w), K)$ exists as a result of filling holes (twice).
\end{proof}

\section{Incomplete designs with growing hole size}

This section extends the strategy which Dukes, Lamken, and
Ling~\cite{DLL2} used to prove Theorem~\ref{ipbd-k}.  Our goal is a proof of the
following.

\begin{thm}
\label{ipbd}
Let $K_0 \subseteq K$ with $\alpha(K_0)=\alpha(K)$.  There exists
IPBD$((v;w),K)$ for all sufficiently large admissible $v,w$ satisfying
$(\ref{local})$, $(\ref{global})$, and $v \ge (\prod_{k \in K_0} k) w$.
\end{thm}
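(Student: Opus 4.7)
The plan is to build IPBDs with a growing hole by combining the fixed-hole size Theorem~\ref{thm:ipbd:K:fixed-w} with IGDD-based inflation, inducting on $|K_0|$ so that each step multiplies the ratio $v/w$ by one element of $K_0$, thereby reaching the target $\prod_{k \in K_0} k$.

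For the base case, take $K_0$ minimal with $\alpha(K_0)=\alpha(K)$. In the simplest case $K_0=\{k_0\}$ with $k_0-1=\alpha(K)$, the required bound is $v\ge k_0 w$. I would first apply Theorem~\ref{thm:ipbd:K:fixed-w} to produce IPBD$((v^*;w^*),K)$ with $w^*$ fixed in every admissible residue class modulo $\beta(K)$, for all sufficiently large $v^*$. Then Theorem~\ref{asym-igdd} supplies an IGDD$((g;h)^u,K)$ with $g/h$ slightly above $k_0$ (which is $\min K$ in this case, so (\ref{ineq-igdd}) permits this) for all sufficiently large admissible $u$, whose groups I would fill using Lemma~\ref{const:igdd-ipbd} with the seed IPBDs. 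The output is an IPBD$((gu+i;hu+i),K)$ whose ratio tends to $g/h\ge k_0$, matching the required bound.

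For the inductive step with $|K_0|\ge 2$, I would pick $k_0\in K_0$ such that $K_0':=K_0\setminus\{k_0\}$ still satisfies $\alpha(K_0')=\alpha(K)$; such a choice is available whenever $K_0$ was not chosen minimally, which we may arrange at the start by also proving the result for all superset $K_0$ to a minimum witness. The hypothesis yields ingredient IPBD$((g+i;h+i),K)$ with ratio at least $\prod_{k\in K_0'} k$ for all large admissible $(g+i,h+i)$. Inflate by a factor of $k_0$ using an IGDD$((g;h)^u,K)$ produced by Theorem~\ref{asym-igdd} with $g/h$ just above $k_0$ and $u$ large, then fill via Lemma~\ref{const:igdd-ipbd}. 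With the ingredient hole $h+i$ taken much smaller than $g+i$, the composite ratio is close to $(g/h)\cdot(g+i)/(h+i) \ge k_0\cdot \prod_{k\in K_0'} k=\prod_{k\in K_0}k$.

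The main obstacle will be \emph{hitting every admissible congruence class modulo $\beta(K)$} at each stage. This requires, at every inflation, a Chinese remainder argument in the spirit of Proposition~\ref{prop:ipbd:class-all}: choose parameters $(u,g,h,i)$ so that $(v,w)$ lands in the prescribed class while simultaneously satisfying the divisibility hypotheses of Theorem~\ref{asym-igdd} and of the ingredient IPBD. The freedom of using additional block sizes in $K\setminus K_0$ when producing ingredients is what makes this feasible — in particular, because only $\alpha(K_0)=\alpha(K)$ is assumed, the $\beta$-congruences must be handled using the full $K$. A secondary subtlety is ensuring the inductive reduction from $K_0$ to $K_0'$ preserves $\alpha(K_0')=\alpha(K)$; if $K_0$ is already a minimal such set, then the inductive step must instead be replaced by a direct multi-stage construction iterating over all of $K_0$ at once, still yielding the bound $\prod_{k\in K_0}k$.
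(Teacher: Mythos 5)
Your plan has a genuine gap, and it sits in the base case. To fill the groups of an IGDD$((g;h)^u,K)$ via Lemma~\ref{const:igdd-ipbd} you need ingredient IPBD$((g+i;h+i),K)$; if you insist that $g/h$ be only slightly above $k_0$, those ingredients have point-to-hole ratio about $k_0$, which is exactly the kind of ratio-controlled IPBD the theorem is supposed to deliver. Theorem~\ref{thm:ipbd:K:fixed-w} cannot supply them: it is a fixed-hole result whose threshold ``$v$ sufficiently large'' is an uncontrolled function of the hole size. So either you keep the ingredient hole $h+i$ bounded, in which case $g\le k_0h$ is bounded as well and nothing guarantees that the finitely many candidate designs IPBD$((g+i;h+i),K)$ exist below the unknown threshold; or you let $h\to\infty$ with $g/h\to k_0$, which is assuming the theorem being proved. (Using the fixed-hole seeds as you describe, with $h+i=w^*$ fixed, does give a growing hole $hu+i$, but only at ratio $g/h\ge (v_0(w^*)-i)/w^*$, an uncontrolled constant, so the bound $v\ge(\prod_{k\in K_0}k)w$ is not reached.) There is also a computational error in your inductive step: Lemma~\ref{const:igdd-ipbd} applied to IGDD$((g;h)^u,K)$ produces IPBD$((gu+i;hu+i),K)$, whose ratio is about $g/h$; ratios do not compound as $(g/h)\cdot(g+i)/(h+i)$, so the induction on $K_0$ does not manufacture the product $\prod_{k\in K_0}k$ in the way you suggest. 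Finally, the pairs $(gu+i,hu+i)$ form a sparse family, and covering \emph{all} sufficiently large admissible $(v,w)$ --- the real work --- is deferred to an unspecified Chinese remainder argument.

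The paper's route is different at precisely these points. The growing hole is not obtained by merging IGDD holes but as a large \emph{group} of a GDD: one starts from a TD$(m+2,n)$, truncates one group to $p$ points, applies Wilson's fundamental construction (Lemma~\ref{wfc-lemma}) with weight $k-1$ on the first $m+1$ groups and weights from $R=\{k-1,k^2-1,r\}$ on the last, using the special ingredient GDDs of Lemmas~\ref{lem:ipbd:gdd-max} and \ref{lem:ipbd:gdd-small}; all groups except the last are then filled by Lemma~\ref{const:gdd-ipbd} using Theorem~\ref{thm:ipbd:K:fixed-w} with a \emph{bounded} hole $z=w\bmod k$ (the only way the fixed-hole theorem is legitimately usable), and the last group, whose size runs over an arithmetic progression of difference $k(k-1)$ up to roughly $(k-1)n(m-1)/(k-2)$, becomes the hole. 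This yields Lemma~\ref{lem:ipbd:recur}: ratio down to $k-1+\epsilon$, but only when $v-w\equiv 0\pmod{k-1}$. The hypothesis $\alpha(K_0)=\alpha(K)$ then enters through the technical Lemma~\ref{lem:ipbd:tech}, which writes $v-w=\sum_{k\in K_0}c_k(k-1)$ with every intermediate pair admissible, and the factor $\prod_{k\in K_0}k$ arises from iterated filling of nested holes, one $k\in K_0$ per stage. None of these mechanisms --- ratio control via the truncated transversal design, coverage of hole sizes via the progression of values in $n*R$, and the admissible decomposition of $v-w$ --- are present in your proposal, and without them the argument does not close.
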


\rk
We can actually do a little bit better, replacing factors of $k$ with 
$k-1+\epsilon$, where small $\epsilon$ drives up the choice of $v,w$.

We first note two important ingredient designs with a single block size $k$.
For convenience here, we abbreviate `$\{k\}$' simply by `$k$' in the notation.

\begin{lemma}[\cite{DLL2}] \label{lem:ipbd:gdd-max}
For sufficiently large $m$ with $m \equiv -1 \pmod{k}$ and $m \equiv 1
\pmod{k - 2}$, there exist both GDD$((k - 1)^m r^1, k)$ and GDD$((k -
1)^{m + 1} r^1, k)$, where $r = (k - 1) (m - 1) / (k - 2)$.
\end{lemma}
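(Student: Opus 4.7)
My plan is to reduce the existence of each GDD to a resolvable design on the small groups, and then to invoke Wilson-type asymptotic existence results.

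\textbf{Divisibility.} I will first verify that the stated congruences exactly ensure integrality of all parameters. The condition $m \equiv 1 \pmod{k-2}$ makes $r = (k-1)(m-1)/(k-2)$ an integer. In GDD$((k-1)^m r^1, k)$ the replication numbers are $r$ (small-group points) and $m$ (big-group points), and the total block count is $mr$. In GDD$((k-1)^{m+1} r^1, k)$ the small-group replication is $m + (m-1)/(k-2)$ and the big-group replication is $m + 1$; integrality here additionally needs $k \mid m+1$, which is $m \equiv -1 \pmod k$.

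\textbf{Structural reduction.} In GDD$((k-1)^m r^1, k)$ a small-group point lies in $r$ blocks and must meet each of the $r$ big-group points exactly once, so \emph{every} block meets the big group. Each block is then one big point plus a transversal $(k-1)$-subset of small points; the $m$ blocks through any fixed big point use $m(k-1)$ small-point slots, equal to the total number of small points, so they partition the small points into an $m$-part parallel class of $(k-1)$-transversals. This yields the equivalence
\[
\mathrm{GDD}((k-1)^m r^1, k) \;\Longleftrightarrow\; \text{resolvable } \mathrm{GDD}((k-1)^m, k-1) \text{ with } r \text{ parallel classes.}
\]
An analogous count for the second GDD, using $k \mid m+1$, shows each small point lies in exactly one block that avoids the big group; those blocks form a single parallel class of $k$-transversals of the $m+1$ small groups. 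So GDD$((k-1)^{m+1} r^1, k)$ is equivalent to a resolvable design on $m+1$ groups of $k-1$ points having $r$ parallel classes of $(k-1)$-transversals together with one additional parallel class of $k$-transversals.

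\textbf{Building the resolvable bases.} For large $m$ in the stated congruence classes, I would construct each resolvable base via Wilson's fundamental construction (Lemma~\ref{wfc-lemma}) on an asymptotically existing transversal design, using small PBD and uniform GDD ingredients from Theorems~\ref{asym-pbd} and~\ref{asym-gdd}. The lone ``extra'' parallel class of $k$-transversals needed in the second case can be supplied by truncating a suitable resolvable PBD obtained from Theorem~\ref{thm:rpbd:asym}. The resolvable bases then lift to the target GDDs by adjoining $r$ new big-group points, one per $(k-1)$-block parallel class, and appending each new point to every block of its class.

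The principal obstacle is producing the correct \emph{number} of parallel classes ($r$, respectively $r+1$) while maintaining resolvability through the recursive steps; the congruences on $m$ are exactly what make this arithmetic close. Aligning the solitary $k$-block parallel class with the $r$ classes of $(k-1)$-blocks in the second design is the most delicate point.
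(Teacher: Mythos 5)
Your first two steps are sound: the replication counts are right, and the structural reduction is the correct way to view these objects. In GDD$((k-1)^m r^1,k)$ every block must meet the long group, so the design is equivalent to a resolvable GDD$((k-1)^m,\{k-1\})$ with its $r$ parallel classes capped by $r$ new points; similarly GDD$((k-1)^{m+1} r^1,k)$ is equivalent to a design on $m+1$ groups of size $k-1$ whose blocks split into $r$ parallel classes of $(k-1)$-transversals and exactly one parallel class of $k$-transversals. (Note the paper itself gives no proof of this lemma; it is imported from \cite{DLL2}, so you are on your own for the construction step.) In fact, for the first GDD you are one observation away from a complete argument using only tools quoted in the paper: by Theorem~\ref{thm:rpbd:asym} applied with block size $k-1$, a resolvable PBD$((k-1)m,\{k-1\})$ exists for all large $m\equiv 1 \pmod{k-2}$; declaring the blocks of one parallel class to be the groups and adjoining one new point to every block of each of the remaining $(k-1)(m-1)/(k-2)=r$ classes yields GDD$((k-1)^m r^1,k)$ directly.

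The genuine gap is in your third step, and it is concentrated exactly where you admit the argument is ``most delicate.'' Wilson's fundamental construction (Lemma~\ref{wfc-lemma}) and the existence theorems \ref{asym-pbd} and \ref{asym-gdd} say nothing about resolvability: weighting a truncated transversal design produces a GDD, but gives you no control over whether its blocks resolve into parallel classes, let alone into the exact number $r$ of $(k-1)$-classes required, so ``building the resolvable bases'' is asserted rather than proved. Worse, for the second design your plan to supply the lone parallel class of $k$-transversals ``by truncating a suitable resolvable PBD'' cannot work as described: the $r$ classes of $(k-1)$-blocks and the single class of $k$-blocks must \emph{jointly} cover every cross pair exactly once (the $(k-1)$-blocks alone do not form a GDD on the $m+1$ groups), so the $k$-class cannot be manufactured independently and glued on afterwards without creating doubly covered or uncovered pairs. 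This second object is a class-uniformly resolvable structure whose asymptotic existence is precisely the nontrivial content of the lemma (for $k=3$ it is essentially a nearly Kirkman-type configuration), and nothing in your sketch, nor in the results quoted in this paper, delivers it; that is why the lemma is cited from \cite{DLL2} rather than derived here.
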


\begin{lemma}[\cite{DLL2}] \label{lem:ipbd:gdd-small}
Let $s$ be an integer with $s \equiv 0 \pmod{k - 1}$ and $s \equiv -1
\pmod{k}$.  There exist both GDD$((k - 1)^m s^1, k)$ and GDD$((k - 1)^{m
+ 1} s^1, k)$ for all sufficiently large $m \equiv -1 \pmod{k}$.
\end{lemma}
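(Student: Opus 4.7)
The plan is to prove the existence of these two GDDs by combining a uniform master GDD with a controlled expansion of one of its groups, drawing on Theorem~\ref{asym-gdd} as the main existential engine.

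\textbf{Step 1 (divisibility bookkeeping).} First I would verify the numerical conditions for a GDD of type $(k-1)^m s^1$ with block size $k$. The local condition is $(k-1)(m-1)+s\equiv0\pmod{k-1}$ and $(k-1)m\equiv0\pmod{k-1}$, both automatic from $s\equiv0\pmod{k-1}$. The global count is equivalent to $(k-1)m(m-1)+2ms\equiv0\pmod{k}$, and substituting $m\equiv-1\pmod k$ reduces this to $-2(s+1)\equiv0\pmod k$, which follows from $s\equiv-1\pmod k$. The same check for $(k-1)^{m+1}s^1$ uses $m+1\equiv0\pmod k$ and is automatic.

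\textbf{Step 2 (master design).} I would take a uniform GDD$((k-1)^{m+1},k)$ from Theorem~\ref{asym-gdd} for $m$ large in the target congruence class (and similarly GDD$((k-1)^{m+2},k)$ to produce the $m+1$ variant). Pick one group $G$ to be ``expanded'' from size $k-1$ up to size $s$; the pairs among the original $k-1$ points of $G$ and their pairs with points outside $G$ are already covered by the uniform master, so only pairs involving the $s-(k-1)$ newly adjoined points must be supplied.

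\textbf{Step 3 (expansion).} For each new point $x$, I need $m$ additional blocks of size $k$, each containing $x$ and a transversal of $k-1$ of the remaining $m$ groups, so that every old point outside $G$ is paired with $x$ exactly once. Equivalently, for each $x$ I need a partition of the $m(k-1)$ remaining outside points into $m$ transversal-style $(k-1)$-subsets (each meeting distinct groups). Stringing together $s-(k-1)$ such ``resolution classes,'' one per new point, produces the required supplementary blocks. The existence of this resolvable structure on $m$ groups of size $k-1$ reduces, after weight inflation, to a resolvable TD or PBD available from Theorem~\ref{thm:rpbd:asym} and Lemma~\ref{lem:nugdd:asym} once $m$ is sufficiently large in the congruence class $m\equiv-1\pmod k$; the congruence $s\equiv0\pmod{k-1}$ ensures the number of classes is an integer and matches the combinatorial counts.

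\textbf{Main obstacle.} The trickiest step is arranging compatible parameters for the expansion: because $s$ is fixed while $m$ grows, one cannot invoke Lemma~\ref{lem:nugdd:asym} directly with $(k-1)$ playing the role of the ``many'' group size. I expect the cleanest way around this is to absorb the discrepancy into the ingredient design by working with an intermediate IGDD (via Theorem~\ref{asym-igdd}) whose hole plays the role of the expanded group, and then to fill holes to match $s$. Getting the two residue classes $m$ and $m+1\pmod k$ handled simultaneously then follows from the flexibility in choosing parameters in Theorem~\ref{asym-gdd} and Theorem~\ref{asym-igdd}, together with the fact that $s\equiv-1\pmod k$ makes the $\beta$-condition of the final GDD insensitive to which of the two residues $m$ falls into.
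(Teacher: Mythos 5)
The paper itself offers no proof to compare against here: this lemma is imported wholesale from \cite{DLL2}. Judged on its own merits, your construction breaks at its central step. In Step 3 you cannot adjoin new points to a group of an already \emph{complete} GDD: each proposed new block $\{x\}\cup T$, where $T$ is a transversal $(k-1)$-subset of old points, covers not only the $k-1$ pairs $\{x,t\}$ but also the $\binom{k-1}{2}$ pairs inside $T$. Those pairs join points of distinct groups and are already covered exactly once by the master GDD$((k-1)^{m+1},k)$ from Step 2, so they become doubly covered, violating the defining property of a GDD. The legitimate version of this idea starts not from a complete $\{k\}$-GDD but from a \emph{resolvable} structure with blocks of size $k-1$, extending each parallel class by one new point, so that the pairs inside each short block are covered by the extended block and nothing twice. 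But with $K=\{k\}$ no short block may be left unextended, which forces the adjoined group to have size exactly the full replication number $r=(k-1)(m-1)/(k-2)$ --- that is precisely how the companion Lemma~\ref{lem:ipbd:gdd-max} arises, and it shows why a fixed small $s$ (constant, while $r$ grows linearly in $m$) cannot be reached by partial extension. What the lemma actually requires, after deleting the long group, is a $k$-block decomposition of the complete multipartite graph of type $(k-1)^m$ \emph{together with} $s$ block-disjoint parallel classes of transversal $(k-1)$-subsets --- a genuinely different object from a complete GDD plus extra blocks, and the reason \cite{DLL2} needs a separate construction for this case.

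Your ``main obstacle'' paragraph does not repair this. A uniform IGDD$((g;h)^u,K)$ from Theorem~\ref{asym-igdd} would need $g=k-1$, and then inequality (\ref{ineq-igdd}), $g\ge(k-1)h$, forces $h\le 1$, so the holes cannot be tuned to absorb a group of fixed size $s$; and in any case filling holes does nothing to remove the double coverage identified above. For the record, your Step 1 is correct: the type $(k-1)^m s^1$ is admissible for block size $k$ exactly under $s\equiv 0\pmod{k-1}$, $s\equiv -1\pmod{k}$, $m\equiv -1\pmod{k}$ (and similarly for $(k-1)^{m+1}s^1$), and the uniform masters you invoke do exist by Theorem~\ref{asym-gdd}. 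But admissibility plus a uniform master is not a construction, and the expansion mechanism you propose cannot be made to work for blocks that must all have size $k$.
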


Now, we have the following construction, adapted from \cite{DLL2} to handle
$K$ instead of $\{k\}$.

\begin{lemma} \label{lem:ipbd:recur}
For any real $\epsilon > 0$ and a given $k \in K$, there exist IPBD$((v; w),
K)$ for all sufficiently large $v, w$ satisfying $(\ref{local})$,
$(\ref{global})$, $v > (k - 1 + \epsilon) w$, and $v - w \equiv 0 \pmod{k -
1}$.
\end{lemma}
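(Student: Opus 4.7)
The plan is to imitate the recursive construction of Dukes, Lamken, and Ling \cite{DLL2} that proves Theorem~\ref{ipbd-k}, replacing their $\{k\}$-only fill-in by an IPBD with block sizes in $K$ supplied by Theorem~\ref{thm:ipbd:K:fixed-w}. The master ingredient is a uniform incomplete group-divisible design furnished by Theorem~\ref{asym-igdd}, and the flexibility of the arbitrary block set $K$ (rather than just $\{k\}$) is crucial in meeting the divisibility conditions while keeping the hole of the fill-in at a fixed small value.

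Given admissible large $(v,w)$ with $v>(k-1+\epsilon)w$ and $v-w\equiv 0\pmod{k-1}$, I would realize $(v,w)$ as $(gu+i,\,hu+i)$ with $h+i$ equal to a small fixed value $h_0\equiv 1\pmod{\alpha(K)}$, $g\geq(k-1)h$, and both $u$ and $g+i$ beyond the relevant asymptotic thresholds. The IGDD$((g;h)^u,K)$ is produced by Theorem~\ref{asym-igdd}, and the ingredient IPBD$((g+i;h_0),K)$ by Theorem~\ref{thm:ipbd:K:fixed-w} (valid because $h_0$ is fixed and $g+i$ can be made arbitrarily large). Lemma~\ref{const:igdd-ipbd} with $i$ infinity points then glues these pieces into the desired IPBD$((gu+i;hu+i),K)=\text{IPBD}((v;w),K)$. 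Observe that $v/w\to g/h$ as $u$ grows, so taking $g/h$ slightly larger than $k-1$ suffices for the inequality $v>(k-1+\epsilon)w$ to be consistent with the ingredients.

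The main obstacle is the simultaneous selection of $(g,h,u,i)$ so that (a) the local and global congruences $(\ref{local-igdd})$, $(\ref{global-igdd})$ of the IGDD type $(g;h)^u$ hold; (b) the pair $(g+i;h_0)$ satisfies the IPBD admissibility $(\ref{local})$, $(\ref{global})$; (c) the target equations $v=gu+i$, $w=hu+i$ are exactly met; and (d) $g\geq(k-1)h$ and $u,g+i$ are large enough to invoke the asymptotic existence theorems. Because $\alpha(K)$ divides $k-1$ and $\beta(K)$ divides $k(k-1)$, these moduli interact mildly, and the hypothesis $v-w\equiv 0\pmod{k-1}$ precisely provides the alignment of $g-h$ required by the IGDD's global condition. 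A Chinese-remainder-theorem argument then produces a compatible residue class for $(g,h,u,i)$, and the slack $v>(k-1+\epsilon)w$ ensures that $u$ can be made arbitrarily large, that $g+i$ clears the threshold of Theorem~\ref{thm:ipbd:K:fixed-w}, and that $g\geq(k-1)h$ is preserved. One may need to split into a few subcases depending on the residue of $w$ modulo $k-1$ or on whether $h=0$, but no new ideas are required beyond careful bookkeeping.
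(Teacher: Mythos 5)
Your construction runs into an exact representation problem that congruence juggling cannot fix. Filling a uniform IGDD$((g;h)^u,K)$ via Lemma~\ref{const:igdd-ipbd} forces $v=gu+i$ and $w=hu+i$, and since you require the fill-in hole $h+i=h_0$ to be a fixed constant (so that Theorem~\ref{thm:ipbd:K:fixed-w} supplies the ingredient IPBD$((g+i;h_0),K)$), both $h$ and $i$ are bounded. Subtracting the two equations, $u=(w-i)/h$ is an integer of size roughly $w/h_0$ that must divide $v-w=(g-h)u$. But the hypotheses only give $k-1\mid v-w$, and most admissible pairs admit no such divisor: for example, take $w$ admissible and $v=w+(k-1)q$ with $q$ a prime a bit larger than $w$, chosen so that $(v;w)$ is admissible and $v>(k-1+\epsilon)w$; then every divisor of $v-w$ is either at most $k-1$ or at least $q>w$, so no bounded $h,i$ (hence no $u$) can satisfy $v=gu+i$, $w=hu+i$. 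The Chinese remainder theorem can align residue classes of $g,h,u,i$ -- this is exactly what Proposition~\ref{prop:ipbd:class-all} does, and it is why that proposition only produces \emph{one} example in each congruence class rather than all large admissible pairs -- but it cannot manufacture the factorization of $v-w$ that your parametrization demands. Letting $h+i$ grow instead would require ingredient IPBDs with large holes and controlled point-to-hole ratio, which is essentially the statement being proved (or Theorem~\ref{ipbd-k}), so the argument would be circular within your setup. A secondary issue is that when $v/w$ is unbounded your $g$ is unbounded as well, while the ``$u$ sufficiently large'' threshold of Theorem~\ref{asym-igdd} depends on $(g,h)$, and $u\approx w/h_0$ need not clear it.

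The paper's proof avoids all of this by decoupling the control of $v-w$ from the control of $w$. It truncates a transversal design TD$(m+2,n)$ and applies Wilson's fundamental construction, giving the points of one distinguished group weights from $R=\{k-1,k^2-1,r\}$ (with ingredients from Lemmas~\ref{lem:ipbd:gdd-max} and \ref{lem:ipbd:gdd-small}); then all groups except that one are filled through Lemma~\ref{const:gdd-ipbd} using the fixed-hole designs of Theorem~\ref{thm:ipbd:K:fixed-w}. There $v-w=(k-1)(mn+p)$ is tuned by the free parameters $n,p$, while the hole $t+z$ sweeps an arithmetic progression of difference $k(k-1)$ determined by the chosen weighting, so hitting a target pair requires only congruence compatibility and an inequality on the ratio, never a divisor of $v-w$ of prescribed size. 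If you want to salvage an IGDD-based route you would need comparable flexibility in the hole sizes of the groups, which the uniform IGDDs of Theorem~\ref{asym-igdd} alone do not provide.
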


\begin{proof}
Let $m$ be sufficiently large such that for each $x \in R := \{k - 1, k^2 -
1, r\}$, there exist both GDD$((k - 1)^m x^1, k)$ and GDD$((k - 1)^{m + 1}
x^1, k)$.  As $r$ retains its value of $(k - 1) (m - 1) / (k - 2)$ from
Lemma~\ref{lem:ipbd:gdd-max}, $m$ is restricted as stated, and as $k - 1
\equiv k^2 - 1 \equiv 0 \pmod{k - 1}$ and $k -1 \equiv k^2 - 1 \equiv -1
\pmod{k}$, $m$ is also restricted by the existence of these two sets of
group divisible designs by Lemma~\ref{lem:ipbd:gdd-small}.  We may also choose
$m$ so it is of the order $1 / \epsilon$.

Let $z = w \mod{k}$.  By Theorem~\ref{thm:ipbd:K:fixed-w}, there exist
IPBD$((u (k - 1) + z; z), K)$ for all admissible $u \ge u_0(z, K)$.  As $z$
has only $k$ possible congruence classes, we can define $u_0(k) :=
\max\{u_0(z, K)\}$ to be independent of $z$.  Let $y = w - z$; then $y
\equiv 0 \pmod{k (k - 1)}$.

We construct the incomplete pairwise balanced designs starting from a
transversal design.  From \cite{CES}, there exist TD$(m + 2, n)$ for all $n
\ge n_0(m)$.  Then, for $v - w$ sufficiently large, we can express $v - w =
(k - 1) (mn + p)$ such that $k \mid n$, $n \ge n_0(m)$, and both $n, p \ge
u_0(K)$.  We delete all but $p$ points of one of the groups of the
transversal design to obtain a GDD$(n^m p^1 n^1, \{m + 1, m + 2\})$, where
the last group of $n$ is separated for convenience of notation.  We now
assign weights to the points of the group divisible design.  Each point in
the first $m + 1$ groups receives a weight of $k - 1$ and each point in the
final group receives a weight in $R$.   Hence, our ingredient group
divisible designs are of the form GDD$((k - 1)^m x^1, k)$ and GDD$((k - 1)^{m +
1} x^1, k)$, whose existence was shown above, so the result of applying
Wilson's Fundamental Construction is a GDD$(((k - 1) n)^m ((k - 1) p)^1 t^1,
k)$, where $t \in n * R$, the set of $n$-fold sums of integers taken from
$R$.  Finally, since there exists an IPBD$(((k - 1) n + z; z), K)$ and an
IPBD$(((k - 1) p + z; z), K)$, there also exists an IPBD$(((k - 1) (nm + p)
+ t + z; t + z), K)$ by Construction~\ref{const:gdd-ipbd}.

It remains to consider the values of $n * R$.  We need each possible
hole size, so we wish to find an arithmetic progression having difference $k
(k - 1)$, which is precisely the difference between the two smaller terms of
$R$.  If moving to the next value in the arithmetic progression
requires introducing an additional $r$ term compared to the previous sum,
then some number, say $c$, terms of $k^2 - 1$ must be removed, and $c - 1$
terms of $k - 1$ must also be introduced. It is an easy calculation that
\begin{equation}
\label{estimate-c}
c = \frac{m - 1}{k (k - 2)} - \frac{k + 1}{k} < \frac{n (k - 2)}{m - 1},
\quad \mbox{for all sufficiently large $n$.}
\end{equation}
If we let $t_{\max}$ be the largest value of the arithmetic progression,
then 
\begin{align*}
t_{\max} &= (n - (c - 1)) \frac{(k - 1)(m - 1)}{k - 2} + (c - 1) (k^2 - 1)
\\
&\ge (k - 1) n \left( \frac{m - 1}{k - 2} - 1 \right), \quad \mbox{by $(\ref{estimate-c})$.}
\end{align*}
It follows that we achieve constructions for point-hole ratios as small as
$$ \frac{v}{w} < \frac{(k - 1) n (m + 1)}{t_{\max}} + 1 < (1 + O(1/m)) (k -
2) + 1 < k - 1 + \epsilon $$
as required.
\end{proof}

To finish our proof, we combine individual applications of the preceding result for each $k \in K_0$.  But, for this, we
first must prove the following technical lemma to apply each step.

\begin{lemma} \label{lem:ipbd:tech}
Given $K \subseteq \Z_{\ge 2}$, $K_0= \{k_1, \ldots, k_n\}$ with $\alpha(K_0)=\alpha(K)$, and admissible parameters $(v; w)$ for incomplete pairwise
balanced designs with block sizes in $K$, then for all sufficiently large
$v$, we can write $v - w = \sum_{k \in K_0} c_k (k - 1)$, for nonnegative
integers $c_k$ in
such a way that the parameter pair $(\sum_{j = 1}^i c_{k_j} (k_j - 1) + w; \sum_{j = 1}^{i - 1} c_{k_j} (k_j - 1) + w)$ is also
admissible for each $i = 1, \ldots, n$.
\end{lemma}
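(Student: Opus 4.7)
My plan is to convert the admissibility of each intermediate pair into a congruence on the increment $d_j := c_{k_j}(k_j-1)$, solve these congruences for the $c_{k_j}$ prime by prime using CRT, and then produce the actual integer solutions via a Sylvester--Frobenius argument. Write $\alpha := \alpha(K) = \alpha(K_0)$ and $\beta := \beta(K)$, and set $w_i := w + d_1 + \cdots + d_i$, so that $w_0 = w$ and the target is $w_n = v$. Since each $d_j$ is a multiple of $k_j - 1$, and hence of $\alpha$, the local condition $w_i - 1 \equiv 0 \pmod{\alpha}$ is automatic at every step. The only nontrivial requirement is
\[
d_i\bigl(w_i + w_{i-1} - 1\bigr) \;=\; w_i(w_i - 1) - w_{i-1}(w_{i-1} - 1) \;\equiv\; 0 \pmod{\beta}, \qquad i = 1, \dots, n.
\]

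Now fix a prime $p \mid \beta$ with $e := \nu_p(\beta)$. Because $p^e \mid k(k-1)$ for every $k \in K_0$, the set $K_0$ splits as $A_p \sqcup B_p$ where $A_p = \{k \in K_0 : p^e \mid k\}$ and $B_p = \{k \in K_0 : p^e \mid k-1\}$. For $k_j \in B_p$ we get $d_j \equiv 0 \pmod{p^e}$ for free. If $A_p \neq \emptyset$, single out one index $j^{*}_p$ with $k_{j^{*}_p} \in A_p$; prescribe $c_{k_j} \equiv 0 \pmod{p^e}$ for every other $k_j \in A_p$, and $c_{k_{j^{*}_p}} \equiv (v-w)(k_{j^{*}_p}-1)^{-1} \pmod{p^e}$ (the inverse exists since $p \nmid k_{j^{*}_p}-1$). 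Then $d_j \equiv 0 \pmod{p^e}$ for all $j \neq j^{*}_p$ and $d_{j^{*}_p} \equiv v - w \pmod{p^e}$, which forces $w_{j^{*}_p - 1} \equiv w \pmod{p^e}$ and yields
\[
d_{j^{*}_p}\bigl(w_{j^{*}_p} + w_{j^{*}_p - 1} - 1\bigr) \;\equiv\; (v-w)(v+w-1) \;\equiv\; 0 \pmod{p^e}
\]
directly from admissibility of $(v;w)$. When $A_p = \emptyset$, the assumption $\alpha(K_0) = \alpha(K)$ forces $p^e \mid \alpha \mid v - w$, so the automatic choice $d_j \equiv 0 \pmod{p^e}$ for all $j$ is consistent with $\sum d_j = v - w$.

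Gluing these prime-local prescriptions via CRT, each $c_{k_j}$ is pinned down to a residue class $r_j$ modulo $M_j := \prod_{p :\, k_j \in A_p} p^{\nu_p(\beta)}$. Writing $c_{k_j} = r_j + M_j \tilde c_j$ with $r_j$ the canonical nonnegative representative, the task reduces to solving
\[
\sum_{j=1}^n M_j(k_j - 1)\,\tilde c_j \;=\; (v - w) - \sum_{j=1}^n r_j(k_j - 1)
\]
in nonnegative integers $\tilde c_j$. A prime-by-prime check (using $\alpha(K_0) = \alpha(K)$ for primes with $A_p$ empty or with $p \nmid \beta$) confirms that the right-hand side is nonnegative for large $v - w$ and is divisible by $\gcd_j M_j(k_j - 1)$, so the Sylvester--Frobenius theorem yields the required $\tilde c_j$ for all sufficiently large $v - w$.

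The delicate step is the congruence bookkeeping: verifying $\sum_j r_j(k_j - 1) \equiv v - w \pmod{\gcd_j M_j(k_j - 1)}$. This ultimately boils down to the single admissibility identity $(v-w)(v+w-1) \equiv 0 \pmod{\beta}$ applied at each $j^{*}_p$, but it has to be tracked prime by prime, separating the cases $A_p \neq \emptyset$ and $A_p = \emptyset$ (the latter being where the hypothesis $\alpha(K_0) = \alpha(K)$ does its work). Once those congruences align, the Frobenius step and the intermediate admissibility check fall out routinely.
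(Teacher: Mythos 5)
Your proposal is correct, but it proves the lemma by a genuinely different route than the paper. The paper argues by induction on $|K_0|$: setting $a_l=\gcd\{k_i-1: i\le l\}$, it peels off the last block size, writes $v-w = a_{l-1}x + (k_l-1)y$, proves the coprimality claim $\gcd\bigl(a_{l-1},\,\beta/\gcd(a_{l-1},\beta)\bigr)=1$, and then chooses $y$ in a suitable residue class modulo $\beta'=\beta/\gcd(a_{l-1},\beta)$ so that the single new split point $(a_{l-1}x+w;\,w)$ is admissible, after which the induction hypothesis decomposes $a_{l-1}x$. You instead construct all the $c_k$ at once: you reduce intermediate admissibility to $d_i(w_i+w_{i-1}-1)\equiv 0 \pmod{\beta}$, exploit the dichotomy that $p^{\nu_p(\beta)}$ divides either $k$ or $k-1$ for each $k\in K_0$, designate one ``carrier'' block size $k_{j^*_p}$ per prime to absorb $v-w$ modulo $p^{\nu_p(\beta)}$, glue by CRT, and finish with a Sylvester--Frobenius argument for nonnegativity. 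The step you flag as delicate does go through exactly as you indicate: at primes with $A_p\neq\emptyset$ one has $\nu_p\bigl(\gcd_j M_j(k_j-1)\bigr)=\nu_p(\beta)$ and the carrier term contributes $r_{j^*_p}(k_{j^*_p}-1)\equiv v-w \pmod{p^{\nu_p(\beta)}}$ while all other terms vanish; at primes with $A_p=\emptyset$ or $p\nmid\beta$ one has $\nu_p\bigl(\gcd_j M_j(k_j-1)\bigr)=\nu_p(\alpha(K_0))=\nu_p(\alpha(K))$, which divides $v-w$ by the local condition, and each $r_j(k_j-1)$ is likewise divisible --- so the right-hand side is a multiple of the gcd, and it is large once $v-w$ is large. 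Comparing the two: your version avoids the induction and the $\beta'$ coprimality claim, makes completely explicit where the hypothesis $\alpha(K_0)=\alpha(K)$ and the admissibility of $(v;w)$ are used, and handles nonnegativity cleanly via numerical semigroups, at the cost of heavier prime-by-prime bookkeeping; the paper's inductive version only ever solves two-variable congruences and so is shorter to write. One shared caveat: both arguments really require $v-w$ (not merely $v$) to be sufficiently large, which is what the lemma is used for in the proof of the main IPBD theorem.
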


\begin{proof}
Let $K_l = \{k_1, \ldots, k_l\}$, $l = 1, \ldots, n$, and let $a_l =
\gcd\{k_i - 1: i = 1, \ldots, m\}$.  We assume by induction on $l$ that if
$(c + w; w)$ are admissible parameters for incomplete pairwise balanced
designs with block sizes in $K$ and $c \equiv 0 \pmod{a_l}$, then for all
sufficiently large $c$, we can write $c = \sum_{i = 1}^l c_{k_i} (k_i - 1)$
in such a way that $(\sum_{j = 1}^i c_{k_j} (k_j - 1) + w; \sum_{j = 1}^{i -
1} c_{k_j} (k_j - 1) + w)$ is also admissible for each $i = 1, \ldots, l$.

The case $l = 1$ is trivial as $a_1 = k_1 - 1 \mid c$ and so we can choose $c_1 =
\frac{c}{k_1 - 1}$.  Note that $(c + w; w)$ is admissible by assumption.
Now, assume the result holds for all $l'$, $1 \le l'<l$.  We show the result also holds
for $l$.  Let $b = k_l - 1$.  Since $a_l = \gcd(a_{l - 1}, b)$ and $c
\equiv 0 \pmod{a_l}$ by assumption, then $c = a_{l - 1}x + by$ has integer
solutions in $x$ and $y$.  If $x_0, y_0$ is a particular solution, then
every solution is of the form $(x, y) = (x_0 + nb, y_0 - na_{l - 1})$ for $n
\in \mathbb{Z}$.  It remains to find a solution $x, y$ such that $(a_{l -
1}x + w; w)$ is admissible, that is, where $a_{l - 1}x (a_{l-1}x + 2w - 1)
\equiv 0 \pmod{\beta(K)}$.  If $a_{l-1}$ and $\beta(K)$ have a common
factor, it can be divided out.  Put $\beta' :=\beta(K)/\gcd(a_{l-1}, \beta(K))$.

{\sc Claim}. $\gcd(a_{l-1}, \beta') =1$.  

To verify this, we proceed by contradiction, and assume there
exists a prime $p$ such that $p \mid a_{l - 1}$ and $p \mid \beta'$.
Suppose $p^e \exdiv a_{l - 1}$.  Then it must be the case that $p^{e + 1}
\mid \beta(K)$.  As $p \mid a_{l - 1}$, it follows that $p \mid k_i - 1$ and so $\gcd(p, k_i) = 1$ for each $i = 1,
\ldots, l - 1$.  Now, since $p^{e + 1} \mid \beta(K)$, we have
$p^{e + 1} \mid k_i - 1$ for each $i$.  From this, $p^{e + 1} \mid
a_{l - 1}$, which is the contradiction proving the claim. 

It remains to find a solution such that
$a_{l - 1}x (a_{l - 1}x + 2w - 1) \equiv 0 \pmod{\beta'}$.  As $a_{l - 1}x =
c - by$, we have
\begin{align}
\label{tech-cong}
\nonumber
a_{l- 1}x (a_{l- 1}x + 2w - 1) &\equiv (c - by) (c + 2w - by - 1) \\
&\equiv -by (2c + 2w - 1 + y) \pmod{\beta'}.
\end{align}
Choosing $y \equiv 1 - 2c - 2w
\pmod{\beta'}$, which is possible by the claim, ensures by (\ref{tech-cong}) that $(a_{l - 1}x + w; w)$ is admissible.  Then, by 
induction hypothesis, we can write $a_{l - 1} = \sum_{i = 1}^{l - 1} c_{k_i}
(k_i - 1)$ in such a way that 
$$\left(\sum_{j = 1}^i c_{k_j} (k_j - 1) + w;\sum_{j = 1}^{i - 1} c_{k_j} (k_j - 1) + w \right)$$ 
is also admissible for each $i<l$, and the result follows.
\end{proof}

Our asymptotic result on incomplete pairwise balanced designs with multiple
block sizes now easily follows from the previous two results.

\begin{proof}[Proof of Theorem~\ref{ipbd}]
Let $K_0=\{k_1,\dots,k_n\}$.  For large $v - w$, we can write $$v - w = \sum_{k\in K_0} c_k (k - 1)$$ in
such a way satisfying the conclusion of Lemma~\ref{lem:ipbd:tech}, and
also so that, by Lemma~\ref{lem:ipbd:recur}, there exists an IPBD$((\sum_{j
= 1}^i c_{k_j} (k_j - 1) + w; \sum_{j = 1}^{i - 1} c_{k_j} (k_j - 1) + w),
K)$ for $i = 1, \ldots, n$.  The required design exists by inductively
filling holes.
\end{proof}

\section{Proof of the main result and discussion}

We return to our intended application to IMOLS.  First, a latin square $L$
on symbols $[n]$ is \emph{idempotent} if $L_{ii}=i$ for each $i=1,\dots,n$.
It is well known that in a plane ($q-1$ MOLS) of order $q$, we can choose
all but one of these squares to be idempotent.

\begin{prop}[See \cite{Handbook}]
\label{idempotent}
For prime powers $q$, there exist $q-2$ mutually orthogonal idempotent latin
squares of order $q$.
\end{prop}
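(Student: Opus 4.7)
The plan is to exhibit the $q-2$ squares explicitly from the field $\F_q$ and verify the three required properties (latin, idempotent, pairwise orthogonal) by a short linear-algebra check.

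Concretely, for each $a \in \F_q \setminus \{0,1\}$, I would define a square $L^{(a)}$ on symbol set $\F_q$ by
\[
L^{(a)}_{ij} \;:=\; a i + (1 - a) j, \qquad i,j \in \F_q.
\]
That $L^{(a)}$ is a latin square amounts to checking that for fixed $i$ the map $j \mapsto ai+(1-a)j$ is a bijection (requires $1-a \neq 0$) and for fixed $j$ the map $i \mapsto ai+(1-a)j$ is a bijection (requires $a \neq 0$); both hold by the choice $a \in \F_q \setminus \{0,1\}$. Idempotency is the one-line calculation $L^{(a)}_{ii} = ai + (1-a)i = i$.

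For orthogonality of $L^{(a)}$ and $L^{(b)}$ with $a \neq b$, I would observe that the joint map
\[
(i,j) \;\longmapsto\; \bigl(a i + (1-a)j,\; b i + (1-b)j\bigr)
\]
is $\F_q$-linear with matrix determinant $a(1-b) - b(1-a) = a - b \neq 0$, so it is a bijection of $\F_q^2$ onto itself; this is exactly the orthogonality condition. Since $|\F_q \setminus \{0,1\}| = q-2$, this produces the required family.

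No step here is a real obstacle; the only thing to be careful about is the exclusion of both $0$ and $1$ (rather than just $0$, which would merely make $L^{(a)}$ a latin square), since the extra value $a=1$ is what would break column-wise distinctness while still being idempotent. Once that is noted the proof is essentially a direct verification.
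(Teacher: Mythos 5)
Your construction is correct: the squares $L^{(a)}_{ij} = ai + (1-a)j$ for $a \in \F_q \setminus \{0,1\}$ are latin, idempotent, and pairwise orthogonal exactly as you verify, with the determinant computation $a(1-b)-b(1-a)=a-b\neq 0$ giving orthogonality. The paper does not prove this proposition at all --- it simply cites the Handbook \cite{Handbook} --- and your explicit field construction is the standard argument underlying that citation, so there is nothing to reconcile.
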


Idempotent latin squares enjoy the feature that they can be `glued' along the
blocks of a pairwise balanced design; see \cite{CD}, for instance.
To illustrate this construction, consider the case $n=7$, arising from the Fano plane having blocks
$$\{1,2,3\},\{1,4,5\},\{1,6,7\},\{2,4,6\},\{2,5,7\},\{3,4,7\},\{3,5,6\}.$$
We get the idempotent square shown below, with the template square on the first block highlighted.
$$\begin{array}{|ccccccc|}
\hline
\bf1&\bf 3& \bf 2&5&4&7&6 \\
\bf 3&\bf 2& \bf 1&6&7&4&5 \\
\bf 2&\bf 1& \bf 3&7&6&5&4 \\
5&6&7&4&1&2&3  \\
4&7&6&1&5&3&2 \\
7&4&5&2&3&6&1 \\
6&5&4&3&2&1&7 \\
\hline
\end{array}
$$

More generally, we have the following construction for IMOLS from an IPBD and MOLS.  The key idea is that every block intersects the hole in at most one point, and in that case we use template MOLS with a $1\times 1$ subsquare removed.  The above square with bold entries removed is an example.

\begin{lemma}  \label{const:imols}
Suppose there exists an IPBD$((n;m), K)$ and, for each $k \in K$, there
exist $t$ idempotent MOLS of order $k$.  Then there exist $t$-IMOLS$(n;m)$.
\end{lemma}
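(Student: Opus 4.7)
The plan is to emulate the classical PBD-construction for MOLS, gluing idempotent templates along the blocks of the IPBD, with a single modification on blocks that meet the hole. I would write the IPBD as $(V,W,\cB)$ with $V=[n]$ and $|W|=m$, and observe that since no two points of $W$ lie in a common block, every block satisfies $|B\cap W|\le 1$. For each block $B\in\cB$ of size $k$ I would fix $t$ idempotent MOLS of order $k$ on the symbol set $B$ (guaranteed by hypothesis). For each $s\in[t]$, I would build the $s$-th incomplete square $L^{(s)}$ on $[n]$ by copying, for every $B\in\cB$, the $s$-th template of $B$ into the submatrix indexed by $B\times B$; the one exception is that when $B\cap W=\{b\}$, the cell $(b,b)$ is left blank in each of the $t$ templates on $B$. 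Cells $(i,j)$ with $\{i,j\}\subseteq W$ are also left blank.

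Next I would verify the fill pattern, which needs care only on diagonal cells. For $i\ne j$ with $\{i,j\}\not\subseteq W$, the pair $\{i,j\}$ lies in a unique block of $\cB$, which fills $(i,j)$ exactly once. For $(i,i)$ with $i\notin W$, idempotency makes every block containing $i$ assign the symbol $i$ to $(i,i)$, so the overlapping prescriptions agree. For $(i,i)$ with $i\in W$, every block through $i$ has $B\cap W=\{i\}$, so by construction the $(i,i)$-cell of every template on such a block is omitted, leaving the cell of $L^{(s)}$ blank as required. For $\{i,j\}\subseteq W$ with $i\ne j$ no block covers the pair, so nothing is written. The latin row-and-column property of each $L^{(s)}$ then reduces to the fact that the templates are latin on each block, and that distinct blocks through a common row index occupy disjoint column sets (by the pair-covering property of the IPBD).

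For orthogonality of $L^{(s)}$ and $L^{(s')}$ with $s\ne s'$, I would fix a target pair $(x,y)\in[n]^2\setminus W^2$. If $x\ne y$, then $\{x,y\}$ lies in a unique block $B$, and orthogonality of the two templates on $B$ shows that $(x,y)$ is realized at exactly one cell of $B\times B$; this cell is actually filled in $L^{(s)},L^{(s')}$ because the only blanked cell on $B$ (if any) is the diagonal cell $(b,b)$, and $x\ne y$. If $x=y$, then $x\notin W$ (since $(x,y)\notin W^2$), and idempotency places the pair $(x,x)$ at cell $(x,x)$ in the superimposition of any block containing $x$; template orthogonality forbids $(x,x)$ elsewhere within any single block, and no filled cell lies outside every block, so $(x,x)$ is realized uniquely at $(x,x)$. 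The whole argument is essentially bookkeeping; the only place requiring genuine care is the deliberate blanking of the diagonal cell $(b,b)$ on hole-meeting blocks, which is precisely what makes the hole of the resulting IMOLS coincide with $W$ rather than being populated by diagonal idempotent entries.
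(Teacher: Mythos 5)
Your construction is exactly the one the paper intends: it gives no formal proof of this lemma, only the remark that one glues idempotent template MOLS along the blocks of the IPBD and, on each block meeting the hole (necessarily in a single point $b$), uses the templates with the $1\times 1$ subsquare at $(b,b)$ removed --- which is precisely your blanking of the diagonal cell. Your verification of the fill pattern, the latin conditions, and orthogonality is correct (one could add the one-line check that no symbol of $W$ appears in a row or column indexed by $W$, which follows from the same idempotency/single-intersection facts you already use), so this is essentially the same approach, carried out in more detail than the paper.
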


Now, Lemma~\ref{const:imols} and Theorem~\ref{ipbd} (with a careful choice
of $K$) are enough to do the job.

\begin{proof}[Proof of Theorem~\ref{main}]
We show that if $2^f$ is the smallest power of 2 greater than $t + 1$, then
there exist $t$-IMOLS$(n;m)$ for all sufficiently large $n,m$ such that $n
\ge 2^{2f+1} m$.  Let $K_0=\{2^f,2^{f+1}\}$ and $K=K_0 \cup \{3^{2f+1}\}$.
Observe that $\alpha(K_0)=\gcd\{2^f-1,2^{f+1}-1\}=1$ and $\beta(K)=2$.  It
follows by Theorem~\ref{ipbd} that there exist IPBD$((n;m), K)$ for all
sufficiently large integers $n,m$ satisfying $n \ge 2^{2f+1} m$.  Moreover,
there exist $t$ mutually orthogonal idempotent latin squares of each order
$k \in K$ by Proposition~\ref{idempotent}.  It follows by
Lemma~\ref{const:imols} that there exist $t$-IMOLS$(n;m)$.
\end{proof}

We conclude with a few remarks.  First, as is seen from the proof, special
values of $t$ permit a weaker hypothesis than our $n \ge 8(t+1)^2 m$.  But
we do not get a qualitative improvement where the quadratic in $t$ is
reduced to something linear.

There are, though, some special cases in which we can do better.  When $n$
and $m$ fall into certain congruence classes with respect to $t$, we can get
away with a singleton set $K_0$ and hence obtain constructions near a ratio
that is linear in $t$.  Indeed if $p$ is the next prime following $t$, we
have $t$ MOLS of order $p$.  It follows that, after a product, we have
$t$-IMOLS$(n;m)$ with $n=pm$ for all large $m$.  The construction in
\cite[Proposition 3.4]{BvR} has the versatility to `multiply and shift', but
it requires $t$ MOLS of order near the ratio $n/m$.  It is unclear if some
variant of a product construction can, in the general case, get close to the
hole size bound.

On the other hand, if the following strengthening of Theorem~\ref{ipbd}
could be proved, our ratio $8(t+1)^2$ would become linear in $t$.

\begin{conj}
\label{conj}
Let $K \subseteq \Z_{\ge 2}$.  For any real $\epsilon > 0$, there exist
IPBD$((v; w), K)$ for all sufficiently large $v, w$ satisfying
$(\ref{local})$, $(\ref{global})$, and $v >  (\min K - 1 + \epsilon) w$.
\end{conj}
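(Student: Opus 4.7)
The plan is to strengthen Lemma~\ref{lem:ipbd:recur} so that its conclusion covers every admissible pair $(v,w)$ (relative to $K$), not merely those additionally satisfying $v - w \equiv 0 \pmod{k-1}$. Granted such a strengthening, the conjecture follows directly by applying it with $k = \min K$ and then invoking Theorem~\ref{thm:ipbd:K:fixed-w} to finish the hole size; the inductive filling used in the proof of Theorem~\ref{ipbd}, which is precisely what multiplied the ratios up to $\prod_{k \in K_0} k$, is then avoided entirely.

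The construction follows the template of Lemma~\ref{lem:ipbd:recur}. Start from a large TD$(m+2,n)$, truncate two of its groups to sizes $p$ and $q$, and obtain a GDD of type $n^m \, p^1 \, q^1$ with blocks of size $m+1$ or $m+2$. Give each point in the $m$ large groups the constant weight $k-1$, and give the points in the two small groups weights drawn from a carefully chosen set $R$ of admissible ``residue'' values in a short interval near $k-1$. Apply Wilson's Fundamental Construction (Lemma~\ref{wfc-lemma}) using ingredient GDDs of the form GDD$((k-1)^m\, r^1, K)$ and GDD$((k-1)^{m+1}\, r^1, K)$ for $r \in R$ (now allowing block sizes in all of $K$, not just $\{k\}$), and fill the resulting groups using IPBDs supplied by Theorem~\ref{thm:ipbd:K:fixed-w}. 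The dominant contribution to $v$ and $w$ comes from the $(k-1)$-weighted part, so the ratio estimate in Lemma~\ref{lem:ipbd:recur} goes through essentially unchanged and yields $v/w \to k-1 + O(1/m) < k - 1 + \epsilon$ for $m$ sufficiently large.

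The main obstacle is the design of $R$. For the construction to realize every admissible congruence class modulo $\beta(K)$, the $n$-fold sums $n*R$ must cover every class of $v-w$ modulo $\beta(K)$ (rather than merely modulo $k(k-1)$ as in Lemma~\ref{lem:ipbd:recur}). Achieving this requires mixed-block-size analogues of Lemmas~\ref{lem:ipbd:gdd-max} and \ref{lem:ipbd:gdd-small}: one must establish, via Theorem~\ref{asym-gdd} together with appropriate IGDD-based reductions, that for each sufficiently large $m$ in a suitable progression there is a rich family of admissible group sizes $r$ for which GDD$((k-1)^m\, r^1, K)$ exist, with $r$ freely ranging over the residues modulo $\beta(K)$ needed to cover all admissible pairs. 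This number-theoretic bookkeeping, while morally parallel to the $n*R$ analysis in the original proof, appears substantially more delicate, and is presumably what prevented the authors from proving the conjecture here.
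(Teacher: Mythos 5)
You are addressing Conjecture~\ref{conj}, which the paper does not prove; the authors explicitly state that settling it will likely require new techniques, ``perhaps including resolvable designs with mixed block sizes.'' Your proposal does not supply those techniques: its essential step is exactly the open difficulty, and you concede as much at the end. Concretely, you need GDD$((k-1)^m\,r^1,K)$ (and, in your two-truncated-group setup, also ingredients with \emph{two} variable-size groups, since a block of the master design can meet both truncated groups, and blocks of size $m$ occur as well) with the distinguished group size $r$ both as large as roughly $(k-1)(m-1)/(k-2)$ and ranging over all congruence classes needed to hit every admissible pair. Neither Theorem~\ref{asym-gdd} nor Theorem~\ref{asym-igdd} provides anything of this shape: they concern uniform types $g^u$ and $(g;h)^u$ with the number of groups large, not highly non-uniform types with one group of size comparable to $m$ times the others. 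In the single-block-size case, the maximal-group ingredient of Lemma~\ref{lem:ipbd:gdd-max} is in essence equivalent to a resolvable design with block size $k-1$ (Theorem~\ref{thm:rpbd:asym}), which is why \cite{DLL2} succeeds there; the mixed-block-size analogue is precisely the missing resolvability theory, so ``appears substantially more delicate'' is not bookkeeping but the heart of the problem. Note also that admissibility is governed by $v(v-1)-w(w-1) \pmod{\beta(K)}$, not by $v-w$ alone, so even the coverage statement you would need from $n*R$ is more intricate than stated.

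There is a second, structural, problem with the sketch as written: you assign weights ``in a short interval near $k-1$'' to the two small groups and then fill \emph{all} resulting groups with IPBDs from Theorem~\ref{thm:ipbd:K:fixed-w}, whose holes have fixed (bounded) size. Such a construction has no source for a hole of size comparable to $v/(k-1)$, so the claimed conclusion $v/w \le k-1+\epsilon$ cannot come out of it. In Lemma~\ref{lem:ipbd:recur} the large hole is the image of the \emph{unfilled} full group of size $n$, whose points carry the large weight $r=(k-1)(m-1)/(k-2)$ from Lemma~\ref{lem:ipbd:gdd-max}; the ratio estimate works only because $t_{\max}\approx (k-1)n(m-1)/(k-2)$ is a constant fraction of $v$. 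If all weights stay near $k-1$ and every group is filled with a small-hole IPBD, then $w$ stays bounded relative to $n$ and the ratio $v/w$ blows up with $m$. So even granting the unproven ingredient GDDs, the construction would need to be reorganized along the lines of the original lemma (large weights on the hole group, small common hole $z$ for the fillings) before the ratio analysis could ``go through essentially unchanged.''
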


It is likely that settling Conjecture \ref{conj} will require new
techniques, perhaps including resolvable designs with mixed block sizes.

Finally, it is noteworthy that our constructed IMOLS are `approximately symmetric'.

\end{document}